\title{The smooth locus in infinite-level Rapoport-Zink spaces} 
\author{Alexander Ivanov and Jared Weinstein}
\begin{document}
\maketitle

\begin{abstract}  Rapoport-Zink spaces are deformation spaces for $p$-divisible groups with additional structure.  At infinite level, they become preperfectoid spaces.  Let $\cM_{\infty}$ be an infinite-level Rapoport-Zink space of EL type, and let $\cM_{\infty}^\circ$ be one connected component of its geometric fiber.  We show that $\cM_{\infty}^{\circ}$ contains a dense open subset which is cohomologically smooth in the sense of Scholze.  This is the locus of $p$-divisible groups which do not have any extra endomorphisms.  As a corollary, we find that the cohomologically smooth locus in the infinite-level modular curve $X(p^\infty)^{\circ}$ is exactly the locus of elliptic curves $E$ with supersingular reduction, such that the formal group of $E$ has no extra endomorphisms.
\end{abstract}

\section{Main theorem}
Let $p$ be a prime number.  Rapoport-Zink spaces \cite{RapoportZink} are deformation spaces of $p$-divisible groups 
equipped with some extra structure.  This article concerns the geometry of Rapoport-Zink spaces of EL type (endomorphisms + level structure).  In particular we consider the infinite-level spaces $\cM_{\caD,\infty}$, which are preperfectoid spaces \cite{ScholzeWeinstein}.  An example is the space $\cM_{H,\infty}$, where $H/\overline{\bF}_p$ is a $p$-divisible group of height $n$.  The points of $\cM_{H,\infty}$ over a nonarchimedean field $K$ containing $W(\overline{\bF}_p)$ are in correspondence with isogeny classes of $p$-divisible groups $G/\OO_K$ equipped with a quasi-isogeny $G\otimes_{\OO_K} \OO_K/p \to H\otimes_{\overline{\bF}_p} \OO_K/p$ and an isomorphism $\Q_p^n\isom VG$ (where $VG$ is the rational Tate module).  

The infinite-level space $\cM_{\caD,\infty}$ appears as the limit of finite-level spaces, each of which is a smooth rigid-analytic space.  We would like to investigate the question of smoothness for the space $\cM_{\caD,\infty}$ itself, which is quite a different matter.  We need the notion of cohomological smoothness \cite{ScholzeEtaleCohomology}, which makes sense for general morphisms of analytic adic spaces, and which is reviewed in Section \ref{CohomologicalSmoothness}.  Roughly speaking, an adic space is cohomologically smooth over $C$ (where $C/\Q_p$ is complete and algebraically closed) if it satisfies local Verdier duality.  In particular, if $U$ is a quasi-compact adic space which is cohomologically smooth over $\Spa (C,\OO_C)$, then the cohomology group $H^i(U,\bF_\ell)$ is finite for all $i$ and all primes $\ell\neq p$.

Our main theorem shows that each connected component of the geometric fiber of $\cM_{\caD,\infty}$ has a dense open subset which is cohomologically smooth.

\begin{thm} \label{MainTheorem}
Let $\caD$ be a basic EL datum (cf. Section \ref{RZReview}).  Let $C$ be a complete algebraically closed extension of the field of scalars of $\cM_{\caD,\infty}$, and let $\cM_{\caD,\infty}^\circ$ be a connected component of the base change $\cM_{\caD,\infty,C}$.  Let $\cM_{\caD,\infty}^{\circ,\ns}\subset \cM_{\caD,\infty}^{\circ}$ be the non-special locus (cf. Section \ref{SpecialLocus}), corresponding to $p$-divisible groups without extra endomorphisms.  Then $\cM_{\caD,\infty}^{\circ,\ns}$ is cohomologically smooth over $C$.
\end{thm}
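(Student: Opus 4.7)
The statement is local on the source, so it suffices to establish cohomological smoothness in an open neighborhood of each non-special $C$-point $x\in \cM_{\caD,\infty}^{\circ,\ns}$. The central geometric tool will be the Hodge--Tate period map
\[
\pi_{HT} \colon \cM_{\caD,\infty} \longrightarrow \cF\ell_{G,\mu},
\]
whose target is a smooth rigid-analytic flag variety attached to $(G,\mu)$ and is therefore cohomologically smooth over $C$. Because composition of cohomologically smooth morphisms is cohomologically smooth, it will be enough to prove that $\pi_{HT}$ itself is cohomologically smooth of relative dimension zero in an open neighborhood of $x$.

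The crucial input is that $\pi_{HT}$ is $J_b(\Q_p)$-invariant (it depends only on the Hodge filtration on the rational Tate module of the underlying $p$-divisible group), and that its fibers on the admissible locus are precisely the $J_b(\Q_p)$-orbits. Moreover $\pi_{HT}$ is equivariant for the $G(\Q_p)$-actions on source and target, and the combined $G(\Q_p)\times J_b(\Q_p)$-stabilizer of $x$ is identified with $\Aut(G_x)$ via its natural embeddings into both factors (through the quasi-isogeny $\rho$ on one side and the Tate module on the other). The non-special hypothesis asserts that this stabilizer reduces to what the EL datum already dictates; in particular, near $x$ the Hodge--Tate orbit structure is as free as possible, and $\pi_{HT}$ should present itself locally as a $J_b(\Q_p)$-torsor over an open subset of $\cF\ell_{G,\mu}$.

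From here I would invoke the following principle, very much in the spirit of the Fargues--Scholze framework: a $J_b(\Q_p)$-torsor of diamonds over a cohomologically smooth base is itself cohomologically smooth of the same relative dimension, at least with $\bF_\ell$-coefficients for $\ell\neq p$. The underlying reason is that $J_b(\Q_p)$ admits a neighborhood basis of torsion-free pro-$p$ open compact subgroups $\Gamma$, whose continuous $\bF_\ell$-cohomology vanishes above degree zero; away from $p$ a $J_b(\Q_p)$-torsor therefore behaves pro-\'etale-locally like an \'etale cover, and so preserves cohomological smoothness. Applied to the local torsor produced above, this gives cohomological smoothness of $\pi_{HT}$ near $x$ and hence, by the reduction of the first paragraph, of $\cM_{\caD,\infty}^{\circ,\ns}$ itself.

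The main obstacle, and the technical heart of the argument, is the middle step: upgrading the pointwise non-speciality at $x$ to a genuine $v$-local torsor presentation of a neighborhood. This decomposes into (i) openness of the non-special locus, which should follow from upper-semicontinuity of the rank of the endomorphism ring under deformation, ultimately a consequence of Grothendieck--Messing theory; and (ii) the construction of a pro-\'etale cover of a neighborhood of $x$ on which the $J_b(\Q_p)$-action can be trivialized compatibly with $\pi_{HT}$. For basic EL data, (ii) should be available from the Scholze--Weinstein description of $\cM_{\caD,\infty}$ as a moduli of trivializations of universal covers of $p$-divisible groups, together with the basicness hypothesis, which is what makes $J_b$ an inner form of a Levi of $G$ and therefore makes this group-theoretic picture tractable.
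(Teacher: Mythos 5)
Your argument breaks at the step you call a ``principle'': a $J_b(\Q_p)$-torsor over a cohomologically smooth base is \emph{not} cohomologically smooth. Already over the base $\Spa(C,\OO_C)$ the trivial torsor, i.e.\ the constant locally profinite space $\underline{J_b(\Q_p)}$ over $C$, fails: any quasi-compact open has $\pi_0$ a non-discrete profinite set, hence infinite-dimensional $H^0(-,\bF_\ell)$, which is incompatible with the finiteness (and with Verdier duality) that cohomological smoothness forces. The obstruction is not the higher continuous cohomology of compact open subgroups, which is what your pro-$p$/torsion-free argument addresses, but the profinite (non-discrete) topology of the fibers themselves: a pro-\'etale map with profinite fibers does not behave like an \'etale cover for the dualizing complex. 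Indeed your argument proves too much: it would give a cohomologically smooth open neighborhood of $x$ inside the full geometric fiber $\cM_{\caD,\infty,C}$, whereas the paper points out that $\pi_0(\cM_{\caD,\infty,C})$ has no isolated points, so \emph{no} open subset of $\cM_{\caD,\infty,C}$ is cohomologically smooth; passing to a single connected component is essential, and a connected component cannot carry a torsor structure under the full group $J_b(\Q_p)$.

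A second problem is the role you assign to non-speciality. In the basic case $\pi_{HT}$ is identified, via the duality isomorphism, with the Grothendieck--Messing period map of the dual datum, so its fibers over the relevant admissible locus are $J(\Q_p)$-torsors at \emph{every} point, special or not; non-speciality concerns the stabilizer of $\pi_{HT}(x)$ in $\bG(\Q_p)$, equivalently the algebra $A_x$ of extra endomorphisms, and does not change the local fiber structure of $\pi_{HT}$. So even if the torsor principle were repaired, this route could not see where smoothness actually fails. The paper's mechanism is different: a determinant fiber $\cM_{\caD,\infty}^{\tau}$ is realized as the space of sections of a smooth scheme $Z\to X_C$ over the Fargues--Fontaine curve (Lemma \ref{lm:MZ_and_MHinfty} and its EL variant), and the Fargues--Scholze Jacobian criterion (Theorem \ref{TheoremCohomologicalSmoothnessCriterion}) applies on the locus where $s^*\Tan_{Z/X_C}$ has all Harder--Narasimhan slopes positive; the whole point is Theorem \ref{TheoremHNSlopes}, which identifies a slope $\leq 0$ with $A_x\neq F$, i.e.\ with speciality. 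Your step (i), openness of the non-special locus, is then absorbed into the semicontinuity built into that slope condition rather than into Grothendieck--Messing theory.
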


We remark that outside of trivial cases, $\pi_0(\cM_{\caD,\infty,C})$ has no isolated points, which implies that no open subset of $\cM_{\caD,\infty,C}$ can be cohomologically smooth.  (Indeed, the $H^0$ of any quasi-compact open fails to be finitely generated.)  Therefore it really is necessary to work with individual connected components of the geometric fiber of $\cM_{\caD,\infty}$.  

Theorem \ref{MainTheorem} is an application of the perfectoid version of the Jacobian criterion for smoothness, due to Fargues--Scholze \cite{FarguesScholze};  cf. Theorem \ref{TheoremCohomologicalSmoothnessCriterion}.  The latter theorem involves the Fargues-Fontaine curve $X_C$ (reviewed in Section \ref{SectionFFCurve}).  It asserts that a functor $\cM$ on perfectoid spaces over $\Spa(C,\OO_C)$ is cohomologically smooth, when $\cM$ can be interpreted as global sections of a smooth morphism $Z\to X_C$, subject to a certain condition on the tangent bundle $\Tan_{Z/X_C}$. 

In our application to Rapoport-Zink spaces, we construct a smooth morphism $Z\to X_C$, whose moduli space of global sections is isomorphic to $\cM_{\caD,\infty}^\circ$ (Lemma \ref{lm:MZ_and_MHinfty}).  Next, we show that a geometric point $x \in \cM_{\caD,\infty}^\circ(C)$ lies in $\cM_{\caD,\infty}^{\circ,{\rm non-sp}}(C)$ if and only if the corresponding section $s \colon X_C \rightarrow Z$ satisfies the condition that all slopes of the vector bundle $s^\ast\Tan_{Z/X_C}$ on $X_C$ are positive (Theorem \ref{TheoremHNSlopes}).   This is exactly the condition on $\Tan_{Z/X_C}$ required by Theorem \ref{TheoremCohomologicalSmoothnessCriterion}, so we can conclude that $\cM_{\caD,\infty}^\circ$ is cohomologically smooth.


The geometry of Rapoport-Zink spaces is related to the geometry of Shimura varieties.  As an example, consider the tower of classical modular curves $X(p^\infty)$, considered as rigid spaces over $C$.  There is a perfectoid space $X(p^\infty)$ over $C$ for which $X(p^\infty)\sim \varprojlim_n X(p^n)$, and a Hodge-Tate period map $\pi_{HT}\from X(p^\infty)\to \bP^1_C$ \cite{ScholzeTorsion}, which is $\GL_2(\Q_p)$-equivariant.  
Let $X(p^\infty)^\circ\subset X(p^\infty)$ be a connected component.

\begin{cor} \label{CorollaryModularCurve} The following are equivalent for a $C$-point $x$ of $X(p^\infty)^\circ$.  
\begin{enumerate}
\item The point $x$ corresponds to an elliptic curve $E$, such that the $p$-divisible group $E[p^\infty]$ has $\End E[p^\infty]=\Z_p$.  
\item The stabilizer of $\pi_{HT}(x)$ in $\PGL_2(\Q_p)$ is trivial.
\item There is a neighborhood of $x$ in $X(p^\infty)^\circ$ which is cohomologically smooth over $C$.
\end{enumerate}
\end{cor}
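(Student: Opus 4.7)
The plan is to establish (1) $\Leftrightarrow$ (2) by algebraic identification via the Scholze--Weinstein classification, and to link (1) with (3) using Theorem \ref{MainTheorem} together with a sharpness argument for the Jacobian criterion.

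For (1) $\Leftrightarrow$ (2), the classification of $p$-divisible groups over $\OO_C$ by their rational Tate module and Hodge filtration \cite{ScholzeWeinstein} identifies $\End(E[p^\infty]) \otimes_{\Z_p} \Q_p$ with the subalgebra of $\End_{\Q_p}(V_p E) = M_2(\Q_p)$ preserving the Hodge line $\pi_{HT}(x) \subset V_p E \otimes_{\Q_p} C$. A non-scalar matrix preserves a line iff that line is one of its eigenspaces, so this subalgebra reduces to the scalars $\Q_p$ exactly when the stabilizer of $\pi_{HT}(x)$ in $\PGL_2(\Q_p)$ is trivial.

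For (1) $\Rightarrow$ (3), I would first observe that (1) forces $E$ to have supersingular reduction: if $E$ had ordinary or multiplicative reduction, the connected-étale sequence of $E[p^\infty]$ would supply a nonzero nilpotent quasi-endomorphism $E[p^\infty] \twoheadrightarrow E[p^\infty]^{\rm et} \to E[p^\infty]^\circ \hookrightarrow E[p^\infty]$ (using any nonzero element of $\Hom(E[p^\infty]^{\rm et}, E[p^\infty]^\circ) \otimes \Q_p$), contradicting $\End(E[p^\infty])\otimes\Q_p = \Q_p$. In the supersingular case, a neighborhood of $x$ in $X(p^\infty)^\circ$ is identified (by $p$-adic uniformization) with an open subset of $\cM_{H,\infty}^\circ$ for $H$ the basic supersingular $p$-divisible group of height $2$, and $x$ lies in the non-special locus, so Theorem \ref{MainTheorem} yields the cohomologically smooth neighborhood.

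The implication (3) $\Rightarrow$ (1) is the main obstacle. I would argue it via sharpness of the slopes criterion: an extra quasi-endomorphism of $E[p^\infty]$---whether a nilpotent arising from the connected-étale filtration in the non-supersingular case, or a CM element in the supersingular case---preserves the line $\pi_{HT}(x)$ and, via Theorem \ref{TheoremHNSlopes}, translates into a non-positive-slope summand of $s^\ast\Tan_{Z/X_C}$ on the Fargues--Fontaine curve. Cohomological smoothness of a neighborhood of $x$ would then contradict the converse of Theorem \ref{TheoremCohomologicalSmoothnessCriterion}, which must be verified separately---for instance by exhibiting infinite-dimensional cohomology from the stabilizer's action on small neighborhoods of $x$.
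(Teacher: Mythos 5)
Your arguments for (1) $\Leftrightarrow$ (2) and for (1)/(2) $\Rightarrow$ (3) are essentially sound and close to the paper's: the equivalence (1) $\Leftrightarrow$ (2) via the classification of $p$-divisible groups over $\OO_C$ by Tate module plus Hodge--Tate line is a slightly more uniform version of what the paper does (the paper treats the ordinary case by hand and quotes the special-locus lemma of Section \ref{SpecialLocus} in the supersingular case), and (2) $\Rightarrow$ (3) is, as in the paper, an immediate application of Theorem \ref{MainTheorem} on the supersingular locus after excluding ordinary reduction.

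The genuine gap is (3) $\Rightarrow$ (1). Your plan is to show that an extra quasi-endomorphism forces a slope $\leq 0$ in $s^*\Tan_{Z/X_C}$ (Theorem \ref{TheoremHNSlopes}) and then invoke ``the converse of Theorem \ref{TheoremCohomologicalSmoothnessCriterion}.'' But Theorem \ref{TheoremCohomologicalSmoothnessCriterion} is only a sufficient criterion; no converse is proved in the paper or available from Fargues--Scholze, and failing the positive-slope hypothesis does not by itself rule out cohomological smoothness of a neighborhood. You acknowledge this ``must be verified separately,'' but the separate verification is precisely the hard content, and your one-line gesture (``exhibiting infinite-dimensional cohomology from the stabilizer's action'') does not supply it. Moreover, in the ordinary case Theorem \ref{TheoremHNSlopes} does not even apply, since the point does not lie on any $\cM_{H,\infty}$ for the basic $H$; the paper instead argues directly that any small neighborhood of an ordinary point lies in the ordinary locus, whose fibration over the profinite set $\bP^1(\Q_p)$ forces $\pi_0(U)$ to be nondiscrete, hence $H^0(U,\bF_\ell)$ infinite and $U$ not cohomologically smooth. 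For a special supersingular point, the paper explicitly says it knows no direct method and instead imports the semistable formal models of the finite-level Lubin--Tate curves from \cite{WeinsteinSemistable}: a shrinking basis of neighborhoods $Z_{x,m}$ of $x$ whose reductions admit nonconstant, equivariant maps to explicit curves $C_{x,m}$ of positive genus for infinitely many $m$, whence for every affinoid neighborhood $U$ and every level $K$ one bounds $\dim H^1(U/K,\bF_\ell)$ below by the genera, and passing to the limit over $K$ gives $\dim_{\bF_\ell} H^1(U,\bF_\ell)=\infty$, contradicting the finiteness of cohomology that cohomological smoothness of a quasi-compact space would impose. Without an argument of this kind (or some other proof that special points have no smooth neighborhood), your implication (3) $\Rightarrow$ (1) remains unproved.
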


\section{Review of Rapoport-Zink spaces at infinite level}
\label{RZReview}

\subsection{The infinite-level Rapoport-Zink space $\cM_{H,\infty}$}

Let $k$ be a perfect field of characteristic $p$, and let $H$ be a $p$-divisible group of heignt $n$ and dimension $d$ over $k$.  We review here the definition of the infinite-level Rapoport-Zink space associated with $H$.  

First there is the formal scheme $\cM_H$ over $\Spf W(k)$ parametrizing deformations of $H$ up to isogeny, as in \cite{RapoportZink}.  For a $W(k)$-algebra $R$ in which $p$ is nilpotent, $\cM_H(R)$ is the set of isomorphism classes of pairs $(G,\rho)$, where $G/R$ is a $p$-divisible group and $\rho\from H\otimes_k R/p \to G\otimes_R R/p$ is a quasi-isogeny.  

The formal scheme $\cM_H$ locally admits a finitely generated ideal of definition.  Therefore it makes sense to pass to its adic space $\cM_H^{\ad}$, which has generic fiber $(\cM_H^{\ad})_\eta$, a rigid-analytic space over $\Spa(W(k)[1/p],W(k))$.   Then $(\cM_H^{\ad})_{\eta}$ has the following moduli interpretation:  it is the sheafification of the functor assigning to a complete affinoid $(W(k)[1/p],W(k))$-algebra {$(R,R^+)$ the set of pairs $(G,\rho)$, where $G$ is a $p$-divisible group defined over an open and bounded subring $R_0\subset R^+$, and $\rho\from H\otimes_k R_0/p\to G\otimes_{R_0} R_0/p$ is a quasi-isogeny.  There is an action of $\Aut H$ on $\cM_H^{\ad}$ obtained by composition with $\rho$.

Given such a pair $(G,\rho)$, Grothendieck-Messing theory produces a surjection $M(H)\otimes_{W(k)} R \to \Lie G[1/p]$ of locally free $R$-modules, where $M(H)$ is the covariant Dieudonn\'e module.  There is a Grothendieck-Messing period map $\pi_{GM}\from (\cM_H^{\ad})_{\eta}\to \caF\ell$, where $\caF\ell$ is the rigid-analytic space  parametrizing rank $d$ locally free quotients of $M(H)[1/p]$.   The  morphism $\pi_{GM}$ is equivariant for the action of $\Aut H$.  It has open image $\caF\ell^a$ (the admissible locus).

We obtain a tower of rigid-analytic spaces over $(\cM_H^{\ad})_{\eta}$ by adding level structures.  For a complete affinoid $(W(k)[1/p],W(k))$-algebra $(R,R^+)$, and an element of $(\cM_H^{\ad})_{\eta}(R,R^+)$ represented locally on $\Spa(R,R^+)$ by a pair $(G,\rho)$ as above, we have the Tate module $TG=\varprojlim_m G[p^m]$, considered as an adic space over $\Spa(R,R^+)$ with the structure of a $\Z_p$-module \cite[(3.3)]{ScholzeWeinstein}.  Finite-level spaces $\cM_{H,m}$ are obtained by trivializing the $G[p^m]$;  these are finite \'etale covers of $(\cM_H^{\ad})_{\eta}$.   The infinite-level space is obtained by trivializing all of $TG$ at once, as in the following definition. 

\begin{Def}[{\cite[Definition 6.3.3]{ScholzeWeinstein}}] Let $\cM_{H,\infty}$ be the functor which sends a complete affinoid $(W(k)[1/p],W(k))$-algebra $(R,R^+)$ to the set of triples $(G,\rho,\alpha)$, where $(G,\rho)$ is an element of $(\cM_H)^{\ad}_{\eta}(R,R^+)$, and $\alpha\from \Z_p^n \to TG$ is a $\Z_p$-linear map which is an isomorphism pointwise on $\Spa(R,R^+)$.  
\end{Def}

There is an equivalent definition in terms of {\em isogeny} classes of triples $(G,\rho,\alpha)$, where this time $\alpha\from \Q_p^n\to VG$ is a trivialization of the rational Tate module.  Using this definition, it becomes clear that $\cM_{H,\infty}$ admits an action of the product $\GL_n(\Q_p)\times \Aut^0 H$, where $\Aut^0$ means automorphisms in the isogeny category.  Then the period map $\pi_{GM}\from \cM_{H,\infty}\to \caF\ell$ is equivariant for $\GL_n(\Q_p)\times \Aut^0 H$, where $\GL_n(\Q_p)$ acts trivially on $\caF\ell$.  

We remark that $\cM_{H,\infty}\sim \varprojlim_m \cM_{H,m}$ in the sense of \cite[Definition 2.4.1]{ScholzeWeinstein}.

One of the main theorems of \cite{ScholzeWeinstein} is the following.

\begin{thm} \label{TheoremPreperfectoid} The adic space $\cM_{H,\infty}$ is a preperfectoid space.  
\end{thm}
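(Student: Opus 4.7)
My plan is to use the presentation $\cM_{H,\infty}\sim \varprojlim_m \cM_{H,m}$ together with the universal cover of $H$ to realize $\cM_{H,\infty}$ explicitly as an open subfunctor of a preperfectoid ambient space. Introduce the universal cover $\tilde H = \varprojlim_{\times p} H$, a formal group over $k$ on which multiplication by $p$ is an isomorphism. The foundational technical input is that the adic generic fiber $\tilde H^{\ad}_\eta$, after base change to a perfectoid field containing $W(k)[1/p]$, is perfectoid: its coordinate ring is the completed perfection of the rigid-analytic structure sheaf of $H^{\ad}_\eta$, so perfectoidness follows from the standard tilting criterion applied to the $[p]$-tower. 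I would verify this by a local coordinate calculation; the same calculation identifies $(R,R^+)$-points of $\tilde H^{\ad}_\eta$ with $VG$ for any $p$-divisible $G$ quasi-isogenous to $H$ over an open bounded subring of $R^+$.

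With this input in hand, I would use the isogeny-class description of the moduli problem: a point of $\cM_{H,\infty}(R,R^+)$ is an isogeny class of triples $(G,\rho,\alpha)$ with $\alpha\from \Q_p^n\to VG$. Composing with the identification above converts $\alpha$ into an $n$-tuple of $(R,R^+)$-points of $\tilde H^{\ad}_\eta$. This presents $\cM_{H,\infty}$ as the subfunctor of $(\tilde H^{\ad}_\eta)^n$ cut out by the open conditions that the $n$-tuple is pointwise a $\Q_p$-basis and that the induced Hodge filtration on $M(H)[1/p]$ lies in the admissible locus $\caF\ell^a\subset\caF\ell$. Both conditions are open, so $\cM_{H,\infty}$ becomes an open subspace of the preperfectoid space $(\tilde H^{\ad}_\eta)^n$, and hence is itself preperfectoid.

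The hardest step will be checking that the natural comparison map from the finite-level tower $\varprojlim_m \cM_{H,m}$ to the described open subspace of $(\tilde H^{\ad}_\eta)^n$ is an isomorphism of functors on complete affinoid $(W(k)[1/p],W(k))$-algebras. At each finite level one only has a trivialization of the $p^m$-torsion of the universal $G$, and one must show that the pro-system of these data recovers the universal cover after the quasi-isogeny. I would exploit that $\cM_{H,m+1}\to \cM_{H,m}$ is finite \'etale together with a careful commutation of inverse limits with the generic fiber functor. The main obstacle is reconciling the formal-scheme construction of $\tilde H$ with the rigid-analytic tower $(\cM_{H,m})$, requiring enough integral control so that the two inverse limits agree strongly; once this identification is established, preperfectoidness is immediate from openness in $(\tilde H^{\ad}_\eta)^n$.
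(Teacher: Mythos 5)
Your overall strategy (universal cover, quasi-logarithm, embedding $\cM_{H,\infty}$ into $(\tilde{H}^{\ad}_{\eta})^n$) is the same as the paper's, but the crucial claim in your second paragraph is wrong: the image of $\cM_{H,\infty}\to(\tilde{H}^{\ad}_{\eta})^n$ is \emph{not} an open subfunctor, and the conditions cutting it out are not all open. A point of the image is an $n$-tuple $(s_1,\dots,s_n)$ such that the quotient of $M(H)\otimes_{W(k)}R$ by the span of the $\qlog_H(s_i)$ is projective of rank $d$; this forces the $\qlog_H(s_i)$ to span a submodule of rank $n-d$, which is a Zariski \emph{closed} condition (vanishing of $(n-d+1)\times(n-d+1)$ minors), not an open one. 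Already in the Lubin--Tate case ($d=1$, $n=2$), $(\tilde{H}^{\ad}_{\eta})^2$ is a two-dimensional perfectoid ball while $\cM_{H,\infty}$ is the one-dimensional locus where a determinant of quasi-logarithms vanishes, so it cannot be open. Your formulation "the $n$-tuple is pointwise a $\Q_p$-basis" also presupposes the deformation $G$ (a basis of $VG$), and "the induced Hodge filtration lies in $\caF\ell^a$" only makes sense after the rank-$d$ quotient condition is imposed, i.e.\ after the closed condition; so neither condition can be used to argue openness in the ambient space.

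Consequently the step "open subspace of a preperfectoid space, hence preperfectoid" does not suffice. The correct statement, and the one the paper uses, is that $\cM_{H,\infty}$ is an \emph{open subset of a Zariski closed subset} of $(\tilde{H}^{\ad}_{\eta})^n$; the missing ingredient in your argument is the nontrivial fact that (strongly) Zariski closed subsets of perfectoid affinoid spaces are again perfectoid, which is what makes the closed minor-vanishing condition harmless. Two smaller points: $\tilde{H}$ is not a formal group in general, and $\tilde{H}^{\ad}_{\eta}$ is the product of a $d$-dimensional preperfectoid open ball with the constant space $VH^{\et}$, so the "completed perfection of $H^{\ad}_{\eta}$" description only covers the connected case; and the comparison with $\varprojlim_m\cM_{H,m}$ that you flag as the hardest step is not actually needed if one works directly with the moduli description of $\cM_{H,\infty}$ as in the paper.
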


This means that for any perfectoid field $K$ containing $W(k)$, the base change $\cM_{H,\infty}\times_{\Spa(W(k)[1/p],W(k))} \Spa(K,\OO_K)$ becomes perfectoid after $p$-adically completing.

We sketch here the proof of Theorem \ref{TheoremPreperfectoid}.  Consider the ``universal cover'' $\tilde{H}=\varprojlim_p H$ as a sheaf of $\Q_p$-vector spaces on the category of $k$-algebras.  This has a canonical lift to the category of $W(k)$-algebras \cite[Proposition 3.1.3(ii)]{ScholzeWeinstein}, which we continue to call $\tilde{H}$.  The adic generic fiber $\tilde{H}^{\ad}_{\eta}$ is a preperfectoid space, as can be checked ``by hand'':  it is a product of the $d$-dimensional preperfectoid open ball $(\Spa W(k)\powerseries{T_1^{1/p^\infty},\dots,T_d^{1/p^\infty}})_{\eta}$ by the constant adic space $VH^{\et}$, where $H^{\et}$ is the \'etale part of $H$.  
Given a triple $(G,\rho,\alpha)$ representing an element of $\cM_{H,\infty}(R,R^+)$, the quasi-isogeny $\rho$ induces an isomorphism $\tilde{H}^{\ad}_{\eta}\times_{\Spa(W(k)[1/p],W(k))} \Spa(R,R^+)\to \tilde{G}^{\ad}_{\eta}$;  composing this with $\alpha$ gives a morphism $\Q_p^n\to \tilde{H}^{\ad}_{\eta}(R,R^+)$.  We have therefore described a morphism $\cM_{H,\infty} \to (\tilde{H}^{\ad}_{\eta})^n$.  

Theorem \ref{TheoremPreperfectoid} follows from the fact that the morphism $\cM_{H,\infty}\to (\tilde{H}^{\ad})_{\eta}^n$ presents $\cM_{H,\infty}$ as an open subset of a Zariski closed subset of $(\tilde{H}^{\ad})_{\eta}^n$.  We conclude this subsection by spelling out how this is done.  We have a {\em quasi-logarithm} map $\qlog_H\from \tilde{H}^{\ad}_{\eta} \to M(H)[1/p] \otimes_{W(k)[1/p]} \bG_a$ \cite[Definition 3.2.3]{ScholzeWeinstein}, a $\Q_p$-linear morphism of adic spaces over $\Spa(W(k)[1/p],W(k))$.

Now suppose $(G,\rho)$ is a deformation of $H$ to $(R,R^+)$.  The logarithm map on $G$ fits into an exact sequence of $\Z_p$-modules:
\[
0 \to G_\eta^{\ad}[p^{\infty}](R,R^+) \to G_{\eta}^{\ad}(R,R^+) \to \Lie G[1/p].\]
After taking projective limits along multiplication-by-$p$, this turns into an exact sequence of $\Q_p$-vector spaces,
\[
0 \to VG(R,R^+) \to \tilde{G}_{\eta}^{\ad}(R,R^+) \to \Lie G[1/p].\]
On the other hand, we have a commutative diagram
\[
\xymatrix{
\tilde{H}_{\eta}(R,R^+) \ar[r]^{\isom} \ar[d]_{\qlog_H} & \tilde{G}_{\eta}(R,R^+) \ar[d]^{\log_G} \\
M(H)\otimes_{W(k)} R \ar[r] & \Lie G[1/p].
}
\]
The lower horizontal map $M(H)\otimes_{W(k)} R\to \Lie G[1/p]$ is the quotient by the $R$-submodule of $M(H)\otimes_{W(k)} R$ generated by the image of $VG(R,R^+) \to \tilde{G}_{\eta}^{\ad}(R,R^+)\isom \tilde{H}_{\eta}^{\ad}(R,R^+)\to M(H)\otimes_{W(k)} R$.   

Thus if we have a triple $(G,\rho,\alpha)$ representing an element of $\cM_{H,\infty}(R,R^+)$, then we have a map $\Q_p^n\to \tilde{H}_{\eta}^{\ad}(R,R^+)$, such that the cokernel of $\Q_p^n\to \tilde{H}_{\eta}^{\ad}(R,R^+)\to M(H)\otimes_{W(k)} R$ is a projective $R$-module of rank $d$, namely $\Lie G[1/p]$.  This condition on the cokernel allows us to formulate an alternate description of $\cM_{H,\infty}$ which is independent of deformations.

\begin{prop} \label{PropositionAlternateDescriptionOfM} The adic space $\cM_{H,\infty}$ is isomorphic to the functor which assigns to a complete affinoid $(W(k)[1/p],W(k))$-algebra $(R,R^+)$ the set of $n$-tuples $(s_1,\dots,s_n)\in \tilde{H}^{\ad}_{\eta}(R,R^+)^n$ such that the following conditions are satisfied:
\begin{enumerate}
\item The quotient of $M(H)\otimes_{W(k)} R$ by the span of the $\qlog(s_i)$ is a projective $R$-module $W$ of rank $d$.
\item For all geometric points $\Spa(C,\OO_C)\to \Spa(R,R^+)$, the sequence 
\[ 0 \to \Q_p^n\stackrel{(s_1,\dots,s_n)}{\to} \tilde{H}^{\ad}_{\eta}(C,\OO_C)\to W\otimes_R C \to 0\]
is exact.
\end{enumerate}
\end{prop}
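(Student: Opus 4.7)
The plan is to exhibit mutually inverse natural transformations between $\cM_{H,\infty}$ and the functor $\cN$ described in the proposition, which sends $(R,R^+)$ to $n$-tuples $(s_1,\dots,s_n)$ satisfying (1) and (2). Most of the geometric content has already been assembled in the paragraphs preceding the statement; what remains is to verify that these data are equivalent on the nose.

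The forward map $\cM_{H,\infty} \to \cN$ sends $(G,\rho,\alpha)$ to $(s_1,\dots,s_n)$ where $s_i$ is the image of $\alpha(e_i) \in VG(R,R^+)$ under the isomorphism $\tilde{G}^{\ad}_\eta \xrightarrow{\sim} \tilde{H}^{\ad}_\eta$ induced by the quasi-isogeny $\rho$. The commutative diagram just before the proposition identifies the quotient $W = (M(H)\otimes_{W(k)} R)/\mathrm{span}(\qlog(s_i))$ with $\Lie G[1/p]$, which is projective of rank $d$; this gives (1). Condition (2) at a geometric point $\Spa(C,\OO_C)$ is precisely the logarithm exact sequence $0 \to VG(C,\OO_C) \to \tilde{G}^{\ad}_\eta(C,\OO_C) \to \Lie G[1/p]\otimes_R C \to 0$, transported across $\rho$ and $\alpha$, together with the fact that $\alpha$ is pointwise an isomorphism.

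For the inverse, given $(s_1,\dots,s_n) \in \cN(R,R^+)$, one uses the surjection $M(H)\otimes_{W(k)} R \twoheadrightarrow W$ of condition (1) to produce an $R$-point of the flag variety $\caF\ell$. The key step is to promote this to a deformation $(G,\rho)$ of $H$ to $(R,R^+)$: one shows the point lands in the admissible locus $\caF\ell^a$ and invokes the construction of the universal $p$-divisible group on $\caF\ell^a$ from \cite{ScholzeWeinstein}. Condition (2) supplies exactly the admissibility input, since it guarantees that at every geometric point the subspace $\Q_p^n \hookrightarrow \tilde{H}^{\ad}_\eta(C,\OO_C)$ spans a full-rank lattice in the kernel of $\qlog$ modulo $W$, identifying it with the rational Tate module of the $p$-divisible group determined by the Hodge filtration $W\otimes C$. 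Once $(G,\rho)$ is produced, the induced isomorphism $\tilde{H}^{\ad}_\eta \xrightarrow{\sim} \tilde{G}^{\ad}_\eta$ lifts the $s_i$ to elements of $VG$, and (2) again ensures that the resulting map $\alpha\colon \Q_p^n \to VG$ is pointwise an isomorphism. The two constructions are manifestly inverse: $W$ reconstructs $\Lie G[1/p]$, hence the Hodge filtration and thus $G$ up to canonical quasi-isogeny, while the $s_i$ encode $\alpha$.

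The main obstacle is the admissibility step, that is, globalizing pointwise admissibility over $\Spa(R,R^+)$ to an honest $p$-divisible group family with the prescribed quasi-isogeny and level structure. The cleanest route is to invoke the equivalence established in \cite{ScholzeWeinstein} between the data $(G,\rho,\alpha)$ and a $\Q_p$-linear map $\Q_p^n \to \tilde{H}^{\ad}_\eta(R,R^+)$ whose cokernel after $\qlog$ is a rank-$d$ projective quotient of $M(H)\otimes R$, so that conditions (1) and (2) are literally a repackaging of this equivalence. No delicate analytic argument is required beyond reading off the universal cover description already used to prove Theorem~\ref{TheoremPreperfectoid}.
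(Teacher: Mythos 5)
Your proposal is correct and follows essentially the same route as the paper, which likewise constructs the forward map $(G,\rho,\alpha)\mapsto(s_1,\dots,s_n)$ via the universal cover and quasi-logarithm diagram and then defers the reconstruction of $(G,\rho,\alpha)$ from the tuple to the equivalence established in \cite{ScholzeWeinstein} (the proposition is the $B=\Q_p$ case of the result quoted there as Theorem 6.5.4). One small caution: your intermediate suggestion of invoking a ``universal $p$-divisible group on $\caF\ell^a$'' is not accurate as stated (no such universal object exists over the admissible locus itself; in \cite{ScholzeWeinstein} the reconstruction goes through the classification of $p$-divisible groups over $\OO_C$ at geometric points), but this does not affect your argument since your final step is exactly the citation the paper relies on.
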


\subsection{Infinite-level Rapoport-Zink spaces of EL type}\label{sec:infiniteRZEL}
This article treats the more general class of Rapoport-Zink spaces of EL type.  We review these here.

\begin{Def} 
\label{DefinitionRationalELDatum}
Let $k$ be an algebraically closed field of characteristic $p$.  A {\em{rational EL datum}} is a quadruple $\caD=(B,V,
H,\mu)$, where
\begin{itemize}
\item $B$ is a semisimple $\Q_p$-algebra,
\item $V$ is a finite $B$-module,
\item $H$ is an object of the isogeny category of $p$-divisible groups over $k$, equipped with an action $B\to \End H$,
\item $\mu$ is a conjugacy class of $\overline{\Q}_p$-rational cocharacters $\mathbf{G}_m\to \bG$, where $\bG/\Q_p$ is the algebraic group $\GL_B(V)$.
\end{itemize}
These are subject to the conditions:
\begin{itemize} 
\item If $M(H)$ is the (rational) Dieudonn\'e module of $H$, then there exists an isomorphism $M(H)\cong V\otimes_{\Q_p} W(k)[1/p]$ of $B\otimes_{\Q_p} W(k)[1/p]$-modules.  In particular $\dim V=\height H$.
\item In the weight decomposition of $V\otimes_{\Q_p} \overline{\Q}_p\cong \bigoplus_{i\in \Z} V_i$ determined by $\mu$, only weights 0 and 1 appear, and $\dim V_0=\dim H$.
\end{itemize}
\end{Def}

The {\em reflex field} $E$ of $\caD$ is the field of definition of the conjugacy class $\mu$.  We remark that the weight filtration (but not necessarily the weight decomposition) of $V\otimes_{\Q_p}\overline{\Q}_p$ may be descended to $E$, and so we will be viewing $V_0$ and $V_1$ as $B\otimes_{\Q_p} E$-modules.  

The infinite-level Rapoport-Zink space $\cM_{\caD,\infty}$ is defined in \cite{ScholzeWeinstein} in terms of moduli of deformations of the $p$-divisible group $H$ along with its $B$-action.  It admits an alternate description along the lines of Proposition \ref{PropositionAlternateDescriptionOfM}.

\begin{prop}[{\cite[Theorem 6.5.4]{ScholzeWeinstein}}] Let $\caD=(B,V,H,\mu)$ be a rational EL datum.  Let $\breve{E}=E\cdot W(k)$.  Then $\cM_{\caD,\infty}$ is isomorphic to the functor which inputs a complete affinoid $(\breve{E},\OO_{\breve{E}})$-algebra $(R,R^+)$ and outputs the set of $B$-linear maps 
\[ s\from V\to \tilde{H}_\eta^{\ad}(R,R^+), \]
subject to the following conditions.
\begin{itemize}
\item Let $W$ be the quotient 
\[ V\otimes_{\Q_p} R\stackrel{\qlog_H\circ s}{\longrightarrow} M(H)\otimes_{W(k)} R \to W \to 0.\]
Then $W$ is a finite projective $R$-module, which locally on $R$ is isomorphic to $V_0\otimes_{E} R$ as a $B\otimes_{\Q_p} R$-module.
\item For any geometric point $x=\Spa(C,\OO_C)\to\Spa(R,R^+)$, the sequence of $B$-modules
\[0\to V \to \tilde{H}(\OO_C) \to W\otimes_R C \to 0\]
is exact.
\end{itemize}
\end{prop}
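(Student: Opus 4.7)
The plan is to adapt the argument sketched for Proposition \ref{PropositionAlternateDescriptionOfM} in the non-EL case by carrying the $B$-action (and the cocharacter $\mu$) through every step. Starting from the original definition of $\cM_{\caD,\infty}$ as the moduli of isogeny classes of quadruples $(G,\iota,\rho,\alpha)$ over $(R,R^+)$, where $G$ is a $p$-divisible group with a $B$-action $\iota$, $\rho$ is a $B$-linear quasi-isogeny $H\otimes_k R^+/p \to G\otimes_{R^+} R^+/p$, and $\alpha\from V\to VG$ is a $B$-linear trivialization of the rational Tate module. The quasi-isogeny $\rho$ lifts canonically to a $B$-linear isomorphism $\tilde{H}_{\eta}^{\ad}\times_{\Spa(\breve E,\OO_{\breve E})}\Spa(R,R^+)\isom \tilde{G}_\eta^{\ad}$ of universal covers, and composing $\alpha$ with $VG\hookrightarrow \tilde G_\eta^{\ad}(R,R^+)$ and this identification yields the $B$-linear map $s\from V\to \tilde H_\eta^{\ad}(R,R^+)$.

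Next I would verify that the two conditions in the proposition correspond exactly to the remaining structure on the deformation. Using the same commutative diagram between $\qlog_H$ and $\log_G$ that appeared in the non-EL sketch, the cokernel $W$ of $\qlog_H\circ s$ is canonically identified with $\Lie G[1/p]$; in particular it inherits a $B\otimes_{\Q_p} R$-module structure from $\iota$. The condition that, on geometric points, the sequence $0\to V \to \tilde H(\OO_C) \to W\otimes_R C \to 0$ be exact of $B$-modules is simply a reformulation of the assertion that $\alpha$ is an isomorphism pointwise. The $V_0$-condition on $W$ is precisely the Kottwitz-type determinant condition: because the Hodge filtration on $V\otimes \overline{\Q}_p$ cut out by $\mu$ has graded piece $V_0$ in weight $0$, any deformation of $H$ respecting the $B$-action and lying in the admissible locus must have its Lie algebra locally isomorphic to $V_0\otimes_E R$ as a $B\otimes_{\Q_p} R$-module, and conversely this property suffices to pin down the Hodge filtration of the desired type.

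The more delicate direction is the construction of a deformation from a section $s$ satisfying the two conditions. Here the input is the Scholze--Weinstein classification (\cite{ScholzeWeinstein}, as used in the non-EL case) of $p$-divisible groups over integral perfectoid rings in terms of the triple (universal cover, Tate module lattice, Hodge filtration); given $s$, the image of $V$ in $\tilde H_\eta^{\ad}(R,R^+)$ plays the role of the Tate module up to isogeny, and the quotient $W$ provides the Hodge filtration, so one reconstructs $G$ and the $B$-action descends automatically from the $B$-linearity of $s$ and the $B$-equivariance of the Hodge filtration implicit in the $V_0$-condition. One finally checks that this assignment is a quasi-inverse to the forward construction, giving the claimed isomorphism of functors (and sheafifying where necessary, which is harmless since the non-EL functor is already a sheaf and the EL locus is cut out by $B$-linearity, a closed condition).

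The main obstacle is this inverse construction, i.e., verifying that a $B$-linear section $s$ together with a well-behaved quotient $W$ of the correct $B\otimes R$-type really does descend to a genuine deformation of $(H,\iota)$. This rests essentially on the Scholze--Weinstein equivalence and on checking the $B$-equivariance of the Hodge filtration — both of which are clean once the $V_0$-condition is correctly formulated, which is why the precise shape of that condition (local isomorphism as $B\otimes_{\Q_p} R$-modules, rather than merely the right $R$-rank) is essential.
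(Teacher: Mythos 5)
The paper gives no proof of this proposition — it is quoted directly from Scholze--Weinstein (their Theorem 6.5.4) — so the only internal comparison point is the paper's sketch of the non-EL analogue, Proposition \ref{PropositionAlternateDescriptionOfM}, and your proposal follows exactly that route: transport the level structure through the lifted quasi-isogeny on universal covers to get $s$, identify the cokernel of $\qlog_H\circ s$ with $\Lie G[1/p]$ so that the Kottwitz determinant condition becomes the $V_0$-condition and pointwise exactness encodes the trivialization, and invert using the Scholze--Weinstein classification of $p$-divisible groups. This is essentially the same approach as the cited source, and your sketch, including the identification of the inverse construction as the delicate step, is correct.
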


If $\caD = (\Q_p, \Q_p^n, H, \mu)$, where $H$ has height $n$ and dimension $d$ and $\mu(t)=(t^{\oplus d},1^{\oplus (n-d)})$, then $E=\Q_p$ and $\cM_{\caD,\infty} = \cM_{H,\infty}$.

In general, we call $\breve E$ the field of scalars of $\cM_{\caD,\infty}$, and for a complete algebraically closed extension $C$ of $\breve E$, we write $\cM_{\caD,\infty,C} = \cM_{\caD,\infty} \times_{\Spa(\breve E,\OO_{\breve E})} \Spa(C,\OO_C)$ for the corresponding geometric fiber of $\cM_{\caD,\infty}$.

The space $\cM_{\caD,\infty}$ admits an action by the product group $\bG(\Q_p)\times J(\Q_p)$, where $J/\Q_p$ is the algebraic group $\Aut_B^\circ(H)$.  A pair $(\alpha,\alpha')\in \bG(\Q_p)\times J(\Q_p)$ sends $s$ to $\alpha'\circ s\circ \alpha^{-1}$.  

There is once again a Grothendieck-Messing period map $\pi_{GM}\from \cM_{\caD,\infty}\to \caF\ell_\mu$ onto the rigid-analytic variety whose $(R,R^+)$-points parametrize $B\otimes_{\Q_p} R$-module quotients of $M(H)\otimes_{W(k)} R$ which are projective over $R$, and which are of type $\mu$ in the sense that they are (locally on $R$) isomorphic to $V_0\otimes_E R$.  The morphism $\pi_{GM}$ sends an $(R,R^+)$-point of $\cM_{\caD,\infty}$ to the quotient $W$ of $M(H)\otimes_{W(k)} R$ as above.  It is equivariant for the action of $\bG(\Q_p)\times J(\Q_p)$, where $\bG(\Q_p)$ acts trivially on $\caF\ell_\mu$.  In terms of deformations of the $p$-divisible group $H$, the period map $\pi_{GM}$ sends a deformation $G$ to $\Lie G$. 

There is also a Hodge-Tate period map $\pi_{HT}\from \cM_{\caD,\infty}\to \caF\ell_{\mu}'$, where $\caF\ell_\mu'(R,R^+)$ parametrizes $B\otimes_{\Q_p} R$-module quotients of $V\otimes_{\Q_p} R$ which are projective over $R$, and which are (locally on $R$) isomorphic to $V_1\otimes_E R$.   The morphism $\pi_{HT}$ sends an $(R,R^+)$-point of $\cM_{\caD,\infty}$ to the image of $V\otimes_{\Q_p} R\to M(H)\otimes_{W(k)} R$.  It is equivariant for the action of $\bG(\Q_p)\times J(\Q_p)$, where this time $J(\Q_p)$ acts trivially on $\caF\ell_\mu'(R,R^+)$.  In terms of deformations of the $p$-divisible group $H$, the period map $\pi_{HT}$ sends a deformation $G$ to $(\Lie G^\vee)^\vee$.

\section{The Fargues-Fontaine curve}
\label{SectionFFCurve}
\subsection{Review of the curve}  We briefly review here some constructions and results from \cite{FarguesFontaineCurve}.  First we review the absolute curve, and then we cover the version of the curve which works in families.

Fix a perfectoid field $F$ of characteristic $p$, with $F^\circ\subset F$ its ring of integral elements.  Let $\varpi\in F^\circ$ be a pseudo-uniformizer for $F$, and let $k$ be the residue field of $F$.   Let $W(F^\circ)$ be the ring of Witt vectors, which we equip with the $(p,[\varpi])$-adic topology.  Let $\caY_F=\Spa(W(F^\circ),W(F^\circ))\backslash\set{\abs{p[\varpi]}=0}$.  Then $\caY_F$ is an analytic adic space over $\Q_p$.  The Frobenius automorphism of $F$ induces an automorphism $\phi$ of $\caY_F$.  Let $B_F=H^0(\caY_F,\OO_{\caY_F})$, a $\Q_p$-algebra endowed with an action of $\phi$.  Let $P_F$ be the graded ring $P_F=\bigoplus_{n\geq 0} B_F^{\phi=p^n}$.  Finally, the Fargues-Fontaine curve is $X_F=\Proj P_F$.  It is shown in \cite{FarguesFontaineCurve} that $X_F$ is the union of spectra of Dedekind rings, which justifies the use of the word ``curve'' to describe $X_F$.  Note however that there is no ``structure morphism'' $X_F\to \Spec F$.  

If $x\in X_F$ is a closed point, then the residue field of $x$ is a perfectoid field $F_x$ containing $\Q_p$ which comes equipped with an inclusion $i\from F\injects F_x^\flat$, which presents $F_x^\flat$ as a finite extension of $F$. Such a pair $(F_x,i)$ is called an untilt of $F$.  Then $x\mapsto (F_x,i)$ is a bijection between closed points of $X_F$ and isomorphism classes of untilts of $F$, modulo the action of Frobenius on $i$.  Thus if $F=E^\flat$ is the tilt of a given perfectoid field $E/\Q_p$, then $X_{E^\flat}$ has a canonical closed point $\infty$, corresponding to the untilt $E$ of $E^\flat$.

An important result in \cite{FarguesFontaineCurve} is the classification of vector bundles on $X_F$.  (By a vector bundle on $X_F$ we are referring to a locally free $\OO_{X_F}$-module $\caE$ of finite rank.  We will use the notation $V(\caE)$ to mean the corresponding geometric vector bundle over $X_F$, whose sections correspond to sections of $\caE$.)  Recall that an {\em isocrystal} over $k$ is a finite-dimensional vector space $N$ over $W(k)[1/p]$ together with a Frobenius semi-linear automorphism $\phi$ of $N$.   Given $N$, we have the graded $P_F$-module $\bigoplus_{n\geq 0} (N\otimes_{W(k)[1/p]} B_F)^{\phi=p^n}$, which corresponds to a vector bundle $\caE_F(N)$ on $X_F$.  Then the Harder-Narasimhan slopes of $\caE_F(N)$ are negative to those of $N$.  If $F$ is algebraically closed, then every vector bundle on $X_F$ is isomorphic to $\caE_F(N)$ for some $N$.  

It is straightforward to ``relativize'' the above constructions.  If $S=\Spa(R,R^+)$ is an affinoid perfectoid space over $k$, one can construct the adic space $\caY_S$, the ring $B_S$, the scheme $X_S$, and the vector bundles $\caE_S(N)$ as above.  Frobenius-equivalences classes of untilts of $S$ correspond to effective Cartier divisors of $X_S$ of degree 1.  

In our applications, we will start with an affinoid perfectoid space $S$ over $\Q_p$.  We will write $X_S=X_{S^\flat}$, and we will use $\infty$ to refer to the canonical Cartier divisor of $X_{S}$ corresponding to the untilt $S$ of $S^\flat$.  Thus if $N$ is an isocrystal over $k$, and $S=\Spa(R,R^+)$ is an affinoid perfectoid space over $W(k)[1/p]$, then the fiber of $\caE_{S}(N)$ over $\infty$ is $N\otimes_{W(k)[1/p]} R$.  

Let $S = \Spec(R,R^+)$ be as above and let $\infty$ be the corresponding Cartier divisor. We denote the completion of the ring of functions on $\mathcal{Y}_S$ along $\infty$ by $B_{\dR}^+(R)$. It comes equipped with a surjective homomorphism $\theta \colon B_{\dR}^+(R) \rightarrow R$, whose kernel is a principal ideal $\ker(\theta) = (\xi)$. 

\subsection{Relation to $p$-divisible groups}

Here we recall the relationships between $p$-divisible groups and global sections of vector bundles on the Fargues-Fontaine curve.  Let us fix a perfect field $k$ of characteristic $p$, and write $\Perf_{W(k)[1/p]}$ for the category of perfectoid spaces over $W(k)[1/p]$. Given a $p$-divisible group $H$ over $k$ with covariant isocrystal $N$, if $H$ has slopes $s_1,\dots,s_k \in \mathbb{Q}$, then $N$ has the slopes $1-s_1, \dots,1-s_k$. For an object $S$ in $\Perf_{W(k)[1/p]}$ we define the vector bundle $\caE_S(H)$ on $X_S$ by 
\[ \caE_S(H) = \caE_S(N) \otimes_{\OO_{X_S}} \OO_{X_S}(1). \]
Under this normalization, the Harder-Narasimhan slopes of $\caE_S(H)$ are (pointwise on $S$) the same as the slopes of $H$.  

Let us write $H^0(\caE(H))$ for the sheafification of the functor on $\Perf_{W(k)[1/p]}$, which sends $S$ to $H^0(X_S,\caE_S(H))$. 

\begin{prop} \label{BanachColmezSpace} Let $H$ be a $p$-divisible group over a perfect field $k$ of characteristic $p$, with isocrystal $N$.  There is an isomorphism $\tilde{H}^{\ad}_{\eta}\isom H^0(\caE(H))$ of sheaves on $\Perf_{W(k)[1/p]}$ 
making the diagram commute:
\[\xymatrix{
\tilde{H}^{\ad}_{\eta} \ar[rr] \ar[dr]_{\qlog_H} && 
H^0(\caE(H)) \ar[dl] \\
& N\otimes_{W(k)[1/p]} \bG_a,
}
\]
where the morphism $H^0(\caE(H))\to N\otimes_{W(k)[1/p]} \bG_a$ sends a global section of $\caE(H)$ to its fiber at $\infty$.  
\end{prop}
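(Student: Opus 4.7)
My approach is to construct an explicit natural transformation $\tilde H^{\ad}_\eta \to H^0(\caE(H))$ of sheaves on $\Perf_{W(k)[1/p]}$, verify commutativity of the triangle with $\qlog_H$ by showing both composites compute the fiber at the canonical Cartier divisor $\infty$, and then check bijectivity on geometric points by reducing to the isoclinic case via Dieudonn\'e--Manin and the Fargues classification.

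\textbf{Construction and compatibility with $\qlog_H$.} Let $S = \Spa(R,R^+)$ be an affinoid perfectoid space over $W(k)[1/p]$. By definition, a section of $\tilde H^{\ad}_\eta$ over $S$ is a compatible system $(s_m)_{m \geq 0}$ with $s_m \in H^{\ad}_\eta(R,R^+)$ and $p \cdot s_{m+1} = s_m$. Using the canonical $W(k)$-lift of $\tilde H$ from \cite[Proposition 3.1.3(ii)]{ScholzeWeinstein} combined with tilting, this data is equivalent to a $\Q_p$-linear map from $\Q_p$ into $\tilde H(R^{+,\flat})$ viewed over $k$. Covariant Dieudonn\'e theory then repackages such a map as a $\phi$-equivariant element of weight $p$ in $N \otimes_{W(k)[1/p]} B_S^+$; under the $\Proj$-description of $X_S$, such an element is precisely a global section of $\caE_S(N)(1) = \caE_S(H)$ on $X_S$. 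This gives the natural transformation. Commutativity with $\qlog_H$ follows once one observes that the fiber of $\caE_S(H)$ at $\infty$ is canonically $N \otimes_{W(k)[1/p]} R$ (via a trivialization of the Tate twist at $\infty$), and evaluation at $\infty$ reduces the $\phi$-equivariant element modulo $\ker \theta$; by the construction of $\qlog_H$ in \cite[Definition 3.2.3]{ScholzeWeinstein} this reduction is precisely the quasi-logarithm.

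\textbf{Bijectivity and main obstacle.} Since both functors are sheaves, it suffices to verify bijectivity on $(C,\OO_C)$-points for $C$ a complete algebraically closed perfectoid field containing $W(k)[1/p]$. Over the algebraically closed field $k$, the Dieudonn\'e--Manin decomposition splits the isocrystal $N$ into isoclinic pieces, and correspondingly the Fargues classification splits $\caE_C(H)$ into a direct sum of line bundles $\OO_{X_C}(\lambda)$ for the slopes $\lambda$ of $H$. One therefore reduces to the case $H$ isoclinic of some slope $\lambda \in [0,1] \cap \Q$. When $\lambda = 0$ (the \'etale case), both sides identify with the rational Tate module $VH$, and bijectivity is immediate. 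The main obstacle is the positive-slope isoclinic case: there one must identify the preperfectoid open-ball presentation of $\tilde H^{\ad}_\eta(C,\OO_C)$ with the Banach--Colmez space $H^0(X_C, \OO_{X_C}(\lambda)^{\oplus h})$ (with $h$ the height) in a way compatible with the triangle. This requires Fontaine's fundamental exact sequences for the period rings and the explicit description of $\phi = p$-invariants in $B_C$ from \cite{FarguesFontaineCurve}, and is the technically delicate step.
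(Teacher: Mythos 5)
There is a genuine gap, and it sits at the two places where your plan carries all the weight. First, the reduction ``since both functors are sheaves, it suffices to verify bijectivity on $(C,\OO_C)$-points'' is not a valid principle: for sheaves on $\Perf$ (or v-sheaves/diamonds), neither injectivity nor surjectivity of a morphism can be tested on geometric points alone. For instance, the open immersion $\Spa(C,\OO_C)\to \Spa(C,C^+)$ for $C^+\subsetneq \OO_C$ is bijective on $(C',\OO_{C'})$-points for every algebraically closed perfectoid field $C'$, yet is not an isomorphism; surjectivity of a map of sheaves means sections lift locally over arbitrary affinoid perfectoid $(R,R^+)$, which is much stronger than hitting all field-valued points. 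If you want a pointwise criterion you must supply an actual argument (quasicompactness, partial properness, a result from the \'etale cohomology of diamonds, or an inverse constructed functorially); the paper avoids the issue entirely by proving the identification over every affinoid perfectoid $S=\Spa(R,R^+)$ at once: $\tilde{H}^{\ad}_{\eta}(R,R^+)\isom \tilde{H}(R^\circ/p)=\Hom_{R^\circ/p}(\Q_p/\Z_p,H)[1/p]$, which by full faithfulness up to isogeny of the crystalline Dieudonn\'e functor over the f-semiperfect ring $R^\circ/p$ (Theorem A of Scholze--Weinstein) equals $(B_{\rm{cris}}^+(R^\circ)\otimes_{W(k)[1/p]}N)^{\phi=p}$, which is identified with $(B_S\otimes_{W(k)[1/p]}N)^{\phi=p}=H^0(X_S,\caE_S(H))$ by the comparison of $\phi=p$-invariants due to Le Bras.

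Second, even granting a reduction to geometric points, your proposal does not prove the statement there: the positive-slope isoclinic case---identifying $\tilde{H}^{\ad}_\eta(C,\OO_C)$ with the $\phi=p$-eigenspace computing $H^0$ of the corresponding bundle, compatibly with $\qlog_H$---is exactly the content of the proposition, and you flag it as ``the main obstacle'' and ``technically delicate'' without carrying it out. Relatedly, your construction step already assumes the hard input: the assertion that ``covariant Dieudonn\'e theory repackages'' a $\Q_p$-linear map $\Q_p\to\tilde{H}(R^{\flat+})$ as a $\phi$-equivariant element of $N\otimes B$ over a general perfectoid base is precisely Dieudonn\'e theory over (f-)semiperfect rings plus the $B_{\rm{cris}}^+$-versus-$B_S$ comparison, neither of which you prove or cite. (A minor further slip: for non-integral $\lambda$ the bundles $\OO_{X_C}(\lambda)$ are stable of rank equal to the denominator of $\lambda$, not line bundles.) As it stands, the proposal reduces the proposition, by an unjustified reduction, to a statement it then defers, so it does not constitute a proof.
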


\begin{proof}  Let $S=\Spa(R,R^+)$ be an affinoid perfectoid space over $W(k)[1/p]$.  Then $\tilde{H}^{\ad}_{\eta}(R,R^+)\isom\tilde{H}(R^\circ)\isom\tilde{H}(R^\circ/p)$.  Observe that $\tilde{H}(R^\circ/p)=\Hom_{R^\circ/p}(\Q_p/\Z_p,H)[1/p]$, where the Hom is taken in the category of $p$-divisible groups over $R^\circ/p$.  Recall the crystalline Dieudonn\'e functor $G\mapsto M(G)$ from $p$-divisible groups to Dieudonn\'e crystals \cite{Messing}.  Since the base ring $R^{\circ}/p$ is semiperfect, the latter category is equivalent to the category of finite projective modules over Fontaine's period ring $A_{\rm{cris}}(R^{\circ}/p)=A_{\rm{cris}}(R^\circ)$, equipped with Frobenius and Verschiebung. 

Now we apply \cite[Theorem A]{ScholzeWeinstein}:  since $R^{\circ}/p$ is f-semiperfect, the crystalline Dieudonn\'e functor is fully faithful up to isogeny.  Thus
\[ \Hom_{R^\circ/p}(\Q_p/\Z_p,H)[1/p] \isom \Hom_{A_{\rm{cris}}(R^\circ),\phi}(M(\Q_p/\Z_p),M(H))[1/p], \]
where the latter Hom is in the category of modules over $A_{\rm{cris}}(R^\circ)$ equipped with Frobenius.  Recall that $B_{\rm{cris}}^+(R^\circ)=A_{\rm{cris}}(R^\circ)[1/p]$. Since $H$ arises via base change from $k$, we have $M(H)[1/p]=B_{\rm{cris}}^+(R^\circ) \otimes_{W(k)[1/p]} N $.  For its part, $M(\Q_p/\Z_p)[1/p]=B_{\rm{cris}}^+(R^\circ)e$, for a basis element $e$ on which Frobenius acts as $p$.  Therefore
\[ \tilde{H}(R^{\circ})\isom ( B_{\rm{cris}}^+(R^\circ) \otimes_{W(k)[1/p]} N )^{\phi=p}.\]

On the Fargues-Fontaine curve side, we have by definition $H^0(X_S,\caE_S(H))=(B_S\otimes_{W(k)[1/p]} N)^{\phi =p}$.  The isomorphism between $(B_S\otimes_{W(k)[1/p]} N)^{\phi =p}$ and $(B_{\rm{cris}}^+(R^\circ) \otimes_{W(k)[1/p]} N )^{\phi=p}$ is discussed in \cite[Remarque 6.6]{LeBras}.   

The commutativity of the diagram in the proposition is \cite[Proposition 5.1.6(ii)]{ScholzeWeinstein}, at least in the case that $S$ is a geometric point, but this suffices to prove the general case.  
\end{proof}

With Proposition \ref{BanachColmezSpace} we can reinterpret the infinite-level Rapoport Zink spaces as moduli spaces of {\em modifications} of vector bundles on the Fargues-Fontaine curve.  First we do this for $\cM_{H,\infty}$.   In the following, we consider $\cM_{H,\infty}$ as a sheaf on the category of perfectoid spaces over $W(k)[1/p]$.  

\begin{prop}
\label{DescriptionOfMH}
 Let $H$ be a $p$-divisible group of height $n$ and dimension $d$ over a perfect field $k$.  Let $N$ be the associated isocrystal over $k$.  Then $\cM_{H,\infty}$ is isomorphic to the functor which inputs an affinoid perfectoid space $S=\Spa(R,R^+)$ over $W(k)[1/p]$ and outputs the set of exact sequences
\begin{equation}
\label{EquationXSExactSequence}
 0 \to \OO_{X_S}^n \stackrel{s}{\to} \caE_S(H) \to i_{\infty *} W \to 0,
 \end{equation}
where $i_\infty\from \Spec R\to X_S$ is the inclusion, and $W$ is a projective $\OO_S$-module quotient of $N\otimes_{W(k)[1/p]} \OO_S$ of rank $d$.
\end{prop}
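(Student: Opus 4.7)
The plan is to combine Proposition~\ref{BanachColmezSpace} with the alternate description of $\cM_{H,\infty}$ in Proposition~\ref{PropositionAlternateDescriptionOfM}. First I would use Proposition~\ref{BanachColmezSpace} to reinterpret an $n$-tuple $(s_1, \dots, s_n) \in \tilde{H}^{\ad}_\eta(R, R^+)^n$ as a single morphism of $\OO_{X_S}$-modules $s \from \OO_{X_S}^n \to \caE_S(H)$ on the relative Fargues-Fontaine curve. The task then reduces to showing that conditions (1) and (2) of Proposition~\ref{PropositionAlternateDescriptionOfM} are jointly equivalent to the requirement that $s$ be injective with cokernel of the form $i_{\infty *} W$ for a finite projective $\OO_S$-module $W$ of rank $d$.

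For the direction from the exact sequence \eqref{EquationXSExactSequence} to the tuple description, I would pull \eqref{EquationXSExactSequence} back along the closed immersion $i_\infty$: the commutative diagram in Proposition~\ref{BanachColmezSpace} identifies the fiber $s|_\infty$ with the map $R^n \to M(H)\otimes_{W(k)} R$ sending $e_i$ to $\qlog_H(s_i)$, and right-exactness of the restriction recovers condition (1). Passing to a geometric point $\Spa(C,\OO_C)\to S$ and taking global sections on $X_C$, I would then use the fundamental vanishing $H^0(X_C,\OO_{X_C})=\Q_p$ and $H^1(X_C,\OO_{X_C})=0$ to extract precisely the short exact sequence of condition (2).

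For the reverse direction, starting from conditions (1) and (2), I would first verify injectivity of $s$ pointwise on $S$. At a geometric point $\Spa(C,\OO_C)$, any nonzero kernel of $s$ would be a sub-bundle $\mathcal{K} \subset \OO_{X_C}^n$, and the injectivity of $\Q_p^n \to H^0(X_C,\caE_C(H))$ in condition (2) would force $H^0(X_C,\mathcal{K})=0$, so that all slopes of $\mathcal{K}$ are strictly negative. A degree count on $\coker(s)$, using that $\caE_C(H)$ has degree $d$, would then contradict the cokernel identification supplied by condition (1). Once injectivity holds, condition (1) identifies the first-order restriction of $\coker(s)$ to $\infty$ with the projective module $W$, and condition (2) forces the cokernel to have the correct length at each geometric point of $S$.

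I expect the main difficulty to be the final step of this reverse direction: upgrading the pointwise skyscraper description of $\coker(s)$ to the global scheme-theoretic identification $\coker(s) \cong i_{\infty *} W$ on $X_S$. Concretely, I must rule out any infinitesimal thickening of $\coker(s)$ along $\infty$, i.e., show it is annihilated by the local equation $\xi$ of $\infty$. I would handle this by combining the pointwise length-$d$ computation, which forces $\xi \cdot \coker(s) = 0$ at each geometric point, with the projectivity of $W$ from condition (1) and a Nakayama-type argument applied to the coherent sheaf $\coker(s)$ in a neighborhood of $\infty$.
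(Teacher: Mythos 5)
Your overall route --- reducing to Proposition \ref{PropositionAlternateDescriptionOfM}, translating tuples $(s_1,\dots,s_n)$ into a bundle map via Proposition \ref{BanachColmezSpace}, and then proving the equivalence of the two sets of conditions --- is reasonable, and it differs from the paper, which argues directly from the deformation-theoretic definition: a point $(G,\rho,\alpha)$ gives the logarithm exact sequence $0\to VG\to \tilde{G}^{\ad}_\eta\to \Lie G[1/p]\to 0$ of sheaves on $S$, and the quasi-isogeny plus Proposition \ref{BanachColmezSpace} convert this into \eqref{EquationXSExactSequence}. Your forward direction (pulling back along $i_\infty$ and taking $H^0$ over a geometric point, using $H^1(X_C,\OO_{X_C})=0$) is fine.

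However, the injectivity step of your reverse direction has a genuine gap. You propose to derive a contradiction from: $H^0(X_C,\ker s)=0$ (hence all slopes of $\ker s$ negative), the degree $d$ of $\caE_C(H)$, and ``the cokernel identification supplied by condition (1)''. But condition (1) only constrains the fiber $i_\infty^*\cok(s)=\cok(s_\infty)$, and these ingredients are compatible with $s$ having a nonzero kernel. Concretely, take $H$ ordinary of height $2$ and dimension $1$, so $\caE_C(H)\cong \OO_{X_C}\oplus\OO_{X_C}(1)$, $n=2$, $d=1$; choose $t_1,t_2\in H^0(X_C,\OO_{X_C}(1))$ linearly independent over $\Q_p$ with $t_1(\infty)\neq 0$, and let $s\from\OO_{X_C}^2\to\caE_C(H)$ send $e_i\mapsto (0,t_i)$. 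Then the fiber map at $\infty$ has rank $1$, so its cokernel is $1$-dimensional (condition (1) holds at this point), the map $\Q_p^2\to H^0(X_C,\caE_C(H))$ is injective, and $\ker s$ is a nonzero line bundle of negative degree --- yet $s$ is not injective. So no degree count using only these facts can succeed; what fails in this example is the remainder of condition (2): the kernel of $H^0(X_C,\caE_C(H))\to W\otimes_R C$ is all of $H^0(\OO_{X_C}(1))$, which is infinite-dimensional, and the map is not surjective. A correct argument must use the full exactness in (2). For instance: let $\caE''=\ker\bigl(\caE_C(H)\to i_{\infty*}(W\otimes_R C)\bigr)$, a degree-zero modification through which $s$ factors; the long exact sequence, together with $H^1(X_C,\caE_C(H))=0$ (the slopes of $\caE_C(H)$ are nonnegative) and the exactness in (2), gives $H^0(X_C,\caE'')\cong\Q_p^n$ and $H^1(X_C,\caE'')=0$, so all slopes of $\caE''$ are $\geq 0$ and of total degree $0$, forcing $\caE''\cong\OO_{X_C}^n$; finally a map $\OO_{X_C}^n\to\OO_{X_C}^n$ inducing a bijection on $H^0$ lies in $GL_n(\Q_p)=\Aut(\OO_{X_C}^n)$, hence is an isomorphism, which gives injectivity and the skyscraper identification of $\cok(s)$ simultaneously. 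Your final globalization step (showing $\cok(s)\cong i_{\infty*}W$ over a general affinoid perfectoid $S$, not just pointwise) still needs the care you flagged, but the pointwise argument should be repaired as above before it can be fed into any Nakayama-type descent.
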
  

\begin{proof}
We briefly describe this isomorphism on the level of points over $S=\Spa(R,R^+)$.  Suppose that we are given a point of $\cM_{H,\infty}(S)$, corresponding to a $p$-divisible group $G$ over $R^\circ$, together with a quasi-isogeny $\iota\from H\otimes_{k} R^\circ/p \to G\otimes_{R^\circ} R^\circ/p$ and an isomorphism $\alpha\from \Q_p^n\to VG$ of sheaves of $\Q_p$-vector spaces on $S$.  The logarithm map on $G$ fits into an exact sequence of sheaves of $\Z_p$-modules on $S$,
\[
0 \to G_\eta^{\ad}[p^\infty] \to G_{\eta}^{\ad} \to \Lie G[1/p] \to 0.\]
After taking projective limits along multiplication-by-$p$, this turns into an exact sequence of sheaves of $\Q_p$-vector spaces on $S$,
\[
0 \to VG \to \tilde{G}_{\eta}^{\ad} \to \Lie G[1/p] \to 0.\]

The quasi-isogeny induces an isomorphism $\tilde{H}_{\eta}^{\ad} \times_{\Spa W(k)[1/p]} S \isom \tilde{G}_{\eta}^{\ad}$;  composing this with the level structure gives an injective map $\Q_p^n\to \tilde{H}^{\ad}_{\eta}(S)$, whose cokernel $W$ is isomorphic to the projective $R$-module $\Lie G$ of rank $d$.  In light of Theorem \ref{BanachColmezSpace}, the map $\Q_p^n\to \tilde{H}^{\ad}_{\eta}(S)$ corresponds to an $\OO_{X_{S}}$-linear map $s\from \OO_{X_S}^n\to \caE_S(H)$, which fits into the exact sequence in \eqref{EquationXSExactSequence}.
\end{proof}

Similarly, we have a description of $\cM_{\caD,\infty}$ in terms of modifications.

\begin{prop}
\label{PropDescriptionOfMD}
Let $\caD=(B,V,H,\mu)$ be a rational EL datum.
Then $\cM_{\caD,\infty}$ is isomorphic to the functor which inputs an affinoid perfectoid space $S$ over $\breve{E}$ and outputs the set of exact sequences of  $B \otimes_{\Q_p} \OO_{X_S}$-modules
\[ 0 \to V\otimes_{\Q_p} \OO_{X_S} \stackrel{s}{\to} \caE_S(H) \to i_{\infty*} W \to 0,\]
where $W$ is a finite projective $\OO_S$-module, which is locally isomorphic to $V_0\otimes_{\Q_p} \OO_S$ as a $B\otimes_{\Q_p} \OO_S$-module (using notation from Definition \ref{DefinitionRationalELDatum}). 
\end{prop}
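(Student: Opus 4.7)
My plan is to carry over the argument of Proposition \ref{DescriptionOfMH} with the additional book-keeping of the $B$-action. The starting point is the alternate description of $\cM_{\caD,\infty}$ stated just after Definition \ref{DefinitionRationalELDatum} (following \cite[Theorem 6.5.4]{ScholzeWeinstein}), in which an $(R,R^+)$-point is a $B$-linear map $s\colon V\to \tilde H^{\ad}_\eta(R,R^+)$ subject to two conditions. I will translate $s$ into a map of coherent sheaves on $X_S$ using Proposition \ref{BanachColmezSpace} and verify that the two conditions correspond exactly to the modification picture of the present proposition.

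The $B$-action on $H$ endows $\caE_S(H)$ with the structure of a $B\otimes_{\Q_p}\OO_{X_S}$-module, and the isomorphism in Proposition \ref{BanachColmezSpace} becomes $B$-equivariant. A $B$-linear map $s\colon V\to H^0(X_S,\caE_S(H))$ extends uniquely and $\OO_{X_S}$-linearly to a $B\otimes_{\Q_p}\OO_{X_S}$-linear morphism of sheaves
\[ \tilde s\colon V\otimes_{\Q_p}\OO_{X_S}\to \caE_S(H), \]
and this assignment is bijective (taking global sections recovers $s$). The commutative diagram in Proposition \ref{BanachColmezSpace} identifies the fiber $\tilde s|_\infty\colon V\otimes_{\Q_p} R\to N\otimes_{W(k)[1/p]} R=M(H)\otimes_{W(k)} R$ with the map $\qlog_H\circ s$ in the first bullet of the alternate description. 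Hence the requirement that the cokernel of $\qlog_H\circ s$ be a finite projective $R$-module $W$ locally isomorphic to $V_0\otimes_E R$ as a $B\otimes_{\Q_p} R$-module becomes precisely the condition on $W$ in the present proposition. The second bullet---exactness of $0\to V\to \tilde H(\OO_C)\to W\otimes_R C\to 0$ at every geometric point $\Spa(C,\OO_C)\to S$---translates, via $\tilde H(\OO_C)\isom H^0(X_C,\caE_C(H))$, into the exactness of the putative sheaf sequence on each $X_C$.

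The subtle step, and where I expect the main technical content to lie, is upgrading fiberwise exactness to exactness of sheaves on the relative curve $X_S$. Both the source $V\otimes_{\Q_p}\OO_{X_S}$ and the target $\caE_S(H)$ of $\tilde s$ are vector bundles of rank $\height H=\dim V$. Fiberwise injectivity of $\tilde s$ at every geometric point forces $\tilde s$ to be injective as a sheaf map, and the fact that each fiberwise cokernel is supported at the point $\infty$ forces the global cokernel to be a coherent sheaf supported on the relative effective Cartier divisor $\infty\subset X_S$; restricting to $\infty$ identifies this cokernel with $i_{\infty *}W$. This produces the required exact sequence of $B\otimes_{\Q_p}\OO_{X_S}$-modules, and the converse direction---passing from a given modification back to the data $(s,W)$ of the alternate description---is formal, obtained by taking global sections and the fiber at $\infty$.
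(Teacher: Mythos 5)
Your proposal is correct and takes essentially the same route the paper intends: the paper states this proposition without a separate proof, as the EL analogue of Proposition \ref{DescriptionOfMH}, obtained by combining the alternate description of $\cM_{\caD,\infty}$ quoted from \cite[Theorem 6.5.4]{ScholzeWeinstein} with the identification $\tilde{H}^{\ad}_{\eta}\isom H^0(\caE(H))$ of Proposition \ref{BanachColmezSpace}, keeping track of the $B$-module structure throughout — exactly your plan. If anything, your sketch is more explicit than the paper's own treatment about why the pointwise conditions (cokernel at $\infty$ and exactness at geometric points) are equivalent to exactness of the sequence of sheaves on $X_S$.
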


\subsection{The determinant morphism, and connected components}
If we are given a rational EL datum $\caD$, there is a {\em determinant morphism} $\det \from \cM_{\caD,\infty}\to \cM_{\det \caD,\infty}$, which we review below.  For an algebraically closed perfectoid field $C$ containing $W(k)[1/p]$, the base change $\cM_{\det \caD,\infty,C}$ is a locally profinite set of copies of $\Spa C$.  For a point $\tau\in \cM_{\det \caD,\infty}(C)$, let $\cM_{\caD,\infty}^{\tau}$ be the fiber of $\cM_{\caD,\infty}\to \cM_{\det\caD,\infty}$ over $\tau$.  We will prove in Section \ref{ProofOfMainTheorem} that each $\cM_{\caD,\infty}^{\tau,\ns}$ is cohomologically smooth if $\caD$ is basic. This implies that $\pi_0(\cM_{\caD,\infty}^{\tau,\ns})$ is discrete, so that cohomogical smoothness of $\cM_{\caD,\infty}^{\tau,\ns}$ is inherited by each of its connected components.  This is Theorem \ref{MainTheorem}.  In certain cases (for example Lubin-Tate space) it is known that $\cM_{\caD,\infty}^{\tau}$ is already connected \cite{ChenConnectedComponents}.

We first review the determinant morphism for the space $\cM_{H,\infty}$, where $H$ is a $p$-divisible group of height $n$ and dimension $d$ over a perfect field $k$ of characteristic $p$. Let $\breve{E}=W(k)[1/p]$.   For a perfectoid space $S$ over $\breve{E}$, we have the vector bundle $\caE_S(H)$ and its determinant $\det \caE_S(H)$, a line bundle of degree $d$.  (This does not correspond to a $p$-divisible group ``$\det H$'' unless $d\leq 1$.)  We define $\cM_{\det H,\infty}(S)$ to be the functor which inputs a perfectoid space $S=\Spa(R,R^+)$ over $\breve{E}$ and outputs the set of morphisms $s\from \OO_{X_S}\to \det \caE_S(H)$, such that the cokernel of $s$ is a projective $B_{\dR}^+(R)/(\xi)^d$-module of rank 1, where $(\xi)$ is the kernel of $B_{\dR}^+(R)\to R$.    Then for an algebraically closed perfectoid field $C/\breve{E}$, the set $\cM_{\det H,\infty}(C)$ is a $\Q_p^\times$-torsor.  The morphism $\det\from \cM_{H,\infty}\to \cM_{\det H,\infty}$ is simply $s\mapsto \det s$.

For the general case, let $\caD=(B,V,H,\mu)$ be a rational EL datum.  Let $F=Z(B)$ be the center of $B$.  Then $F$ is a semisimple commutative $\Q_p$-algebra, and $V$ is free as an $F$-module.   We put $\bG=\Aut_B(V)$ (as an algebraic group), and then $\bG^{\text{ab}}=\bG/\bG^{\text{der}}=\Aut_F(\det_F V)\isom \Res_{F/\Q_p}\bG_m$.  Let $\mu^{\text{ab}}$ be the composition of $\mu$ with $\bG\to \bG^{\text{ab}}$.  Let $\cM_{\det \caD,\infty}$ be the functor which inputs a perfectoid space $S=\Spa(R,R^+)$ over $\breve{E}$ and outputs the set of $F$-linear morphisms $s\from \det_F V\otimes_{\Q_p} \OO_{X_S}\to \det_F \caE_S(H)$, such that the cokernel of $s$ is of the form $i_{\infty\ast} W$, where $W$ is a finite projective $\OO_S$-module, locally isomorphic to $\det_F V_0 \otimes_{\Q_p} F$ as a $F \otimes_{\Q_p} \OO_S$-module.

\subsection{Basic Rapoport-Zink spaces}
\label{Basic}
The main theorem of this article concerns basic Rapoport-Zink spaces, so we recall some facts about these here.  

Let $H$ be a $p$-divisible group over a perfect field $k$ of characteristic $p$.  The space $\caM_{H,\infty}$ is said to be basic when the $p$-divisible group $H$ (or rather, its Dieudonn\'e module $M(H)$) is isoclinic.  This is equivalent to saying that the natural map 
\[ \End^{\circ} H \otimes_{\Q_p} W(k)[1/p] \to \End_{W(k)[1/p]} M(H)[1/p] \]
is an isomorphism, where on the right the endomorphisms are not required to commute with Frobenius.


More generally we have a notion of basicness for a rational EL datum $(B,H,V,\mu)$, referring to the following equivalent conditions:
\begin{itemize} 
\item The $\bG$-isocrystal ($\bG=\Aut_B V$) associated to $H$ is basic in the sense of Kottwitz \cite{KottwitzIsocrystals}.  
\item The natural map 
\[ \End^\circ_B(H) \otimes_{\Q_p} W(k)[1/p] \to \End_{B\otimes_{\Q_p} W(k)[1/p]} M(H)[1/p] \]
is an isomorphism.
\item Considered as an algebraic group over $\Q_p$, the automorphism group $J=\Aut^\circ_B H$ is an inner form of $\bG$.  
\item Let $D'=\End^\circ_B H$.  For any algebraically closed perfectoid field $C$ containing $W(k)$, the map 
\[ D'\otimes_{\Q_p} \OO_{X_C} \to \SEnd_{(B\otimes_{\Q_p} \OO_{X_C})} \caE_C(H) \]
is an isomorphism.
\end{itemize}

In brief, the duality theorem from \cite{ScholzeWeinstein} says the following.  Given a basic EL datum $\caD$, there is a dual datum $\check{\caD}$, for which the roles of the groups $\bG$ and $J$ are reversed.  There is a $\bG(\Q_p)\times J(\Q_p)$-equivariant isomorphism $\cM_{\caD,\infty}\isom \cM_{\check{\caD},\infty}$ which exchanges the roles of $\pi_{GM}$ and $\pi_{HT}$.

\subsection{The special locus}
\label{SpecialLocus}
Let $\caD=(B,V,H,\mu)$ be a basic rational EL datum relative to a perfect field $k$ of characteristic $p$, with reflex field $E$.  Let $F$ be the center of $B$.  
Define $F$-algebras $D$ and $D'$ by 
\begin{eqnarray*}
D&=& \End_{B} V\\
D'&=& \End_{B} H
\end{eqnarray*}
Finally, let $\bG=\Aut_B V$ and $J=\Aut_B H$, considered as  algebraic groups over $\Q_p$.  Then $\bG$ and $J$ both contain $\Res_{F/\Q_p}\bG_m$.  

Let $C$ be an algebraically closed perfectoid field containing $\breve{E}$, and let $x\in \cM_{\caD,\infty}(C)$.  Then $x$ corresponds to a $p$-divisible group $G$ over $\OO_C$ with endomorphisms by $B$, and also it corresponds to a $B\otimes_{\Q_p}\OO_{X_C}$-linear map $s\from V\otimes_{\Q_p}\OO_X\to \caE_C(N)$ as in Proposition \ref{PropDescriptionOfMD}.  Define $A_x=\End_B G$ (endomorphisms in the isogeny category).  Then $A_x$ is a semisimple $F$-algebra. In light of Proposition \ref{PropDescriptionOfMD}, an element of $A_x$ is a pair $(\alpha,\alpha')$, where $\alpha\in \End_{B\otimes_{\Q_p}\OO_{X_C}} V\otimes \OO_{X_C}=\End_B V=D$ and $\alpha'\in \End_{B\otimes_{\Q_p} \OO_{X_C}} \cE_C(H)=D'$ (the last equality is due to basicness), such that $s\circ\alpha=\alpha'\circ s$.  Thus:
  \[ A_x\isom \set{(\alpha,\alpha')\in D\times D' \biggm\vert  s\circ\alpha = \alpha'\circ s}. \]

\begin{lm} The following are equivalent:
\begin{enumerate}
\item The $F$-algebra $A_x$ strictly contains $F$.
\item The stabilizer of $\pi_{GM}(x)\in \mathcal{F}\ell_\mu(C)$ in $J(\Q_p)$ strictly contains $F^\times$.
\item The stabilizer of $\pi_{HT}(x)\in \mathcal{F}\ell_\mu'(C)$ in $\bG(\Q_p)$ strictly contains $F^\times$.  
\end{enumerate}
\end{lm}
\begin{proof}
As in Proposition \ref{PropDescriptionOfMD}, let $s \colon V\otimes_{\Q_p} \OO_{X_S} \stackrel{s}{\to} \caE_S(H)$ be the modification corresponding to $x$. 

Note that the condition (1) is equivalent to the existence of an invertible element $(\alpha,\alpha') \in A_x$ not contained in (the diagonally embedded) $F$.  Also note that if one of $\alpha,\alpha'$ lies in $F$, then so does the other, in which case they are equal.   

Suppose $(\alpha,\alpha')\in A_x$ is invertible.   The point $\pi_{GM}(x)\in \caF\ell_\mu$ corresponds to the cokernel of the fiber of $s$ at $\infty$.  Since $\alpha'\circ s=s\circ \alpha$, the cokernels of $\alpha'\circ s$ and $s$ are the same, which means exactly that $\alpha'\in J(\Q_p)$ stabilizes $\pi_{GM}(x)$.  Thus (1) implies (2).  Conversely, if there exists $\alpha'\in J(\Q_p)\backslash F^\times$ which stabilizes $\pi_{GM}(x)$, it means that the $B\otimes_{\Q_p}\OO_{X_C}$-linear maps $s$ and $\alpha'\circ s$ have the same cokernel, and therefore there exists $\alpha\in \End_{B\otimes_{\Q_p}\OO_{X_C}} V\otimes_{\Q_p} \OO_{X_C}=D$ such that $s\circ \alpha = \alpha'\circ s$, and then $(\alpha,\alpha')\in A_x\backslash F^\times$.  This shows that (2) implies (1).  

The equivalence between (1) and (3) is proved similarly. 
\end{proof}

\begin{Def} The {\em special locus} in $\cM_{\caD,\infty}$ is the subset $\cM_{\caD,\infty}^{\spe}$ defined by the condition $A_x\neq F$.  The {\em non-special locus} $\cM_{\caD,\infty}^{\ns}$ is the complement of the special locus.
\end{Def}

The special locus is built out of ``smaller'' Rapoport-Zink spaces, in the following sense.  Let $A$ be a semisimple $F$-algebra, equipped with two $F$-embeddings $A\to D$ and $A\to D'$, so that $A\otimes_F B$ acts on $V$ and $H$.  Also assume that a cocharacter in the conjugacy class $\mu$ factors through a cocharacter $\mu_0\from \Gm\to \Aut_{A\otimes_F B} V$.   Let $\caD_0=(A\otimes_F B,V,H,\mu_0)$.  Then there is an evident morphism $\cM_{\caD_0,\infty}\to \cM_{\caD,\infty}$.  The special locus $\cM_{\caD,\infty}^{\spe}$ is the union of the images of all the $\cM_{\caD_0,\infty}$, as $A$ ranges through all semisimple $F$-subalgebras of $D\times D'$ strictly containing $F$.

\section{Cohomological smoothness}
\label{CohomologicalSmoothness}

Let $\Perf$ be the category of perfectoid spaces in characteristic $p$, with its pro-\'etale topology \cite[Definition 8.1]{ScholzeEtaleCohomology}.  For a prime $\ell\neq p$, there is a notion of $\ell$-cohomological smoothness \cite[Definition 23.8]{ScholzeEtaleCohomology}.  We only need the notion for morphisms $f\from Y'\to Y$ between sheaves on $\Perf$ which are separated and representable in locally spatial diamonds.  If such an $f$ is $\ell$-cohomologically smooth, and $\Lambda$ is an $\ell$-power torsion ring, then the relative dualizing complex $Rf^!\Lambda$ is an invertible object in $D_{\et}(Y',\Lambda)$ (thus, it is v-locally isomorphic to $\Lambda[n]$ for some $n\in \bZ$), and the natural transformation $Rf^!\Lambda\otimes f^*\to Rf^!$ of functors $D_{\et}(Y,\Lambda)\to D_{\et}(Y',\Lambda)$ is an equivalence \cite[Proposition 23.12]{ScholzeEtaleCohomology}.  In particular, if $f$ is projection onto a point, and $Rf^!\Lambda\isom \Lambda[n]$, one derives a statement of Poincar\'e duality for $Y'$:
\[ R\Hom(R\Gamma_c(Y',\Lambda),\Lambda)\isom R\Gamma(Y',\Lambda)[n].\]

We will say that $f$ is cohomologically smooth if it is $\ell$-cohomologically smooth for all $\ell\neq p$.  
As an example, if $f\from Y'\to Y$ is a separated smooth morphism of rigid-analytic spaces over $\Q_p$, then the associated morphism of diamonds $f^\diamond\from (Y')^{\diamond}\to Y^{\diamond}$ is cohomologically smooth \cite[Proposition 24.3]{ScholzeEtaleCohomology}.  
There are other examples where $f$ does not arise from a finite-type map of adic spaces.
For instance, if $\tilde{B}_C=\Spa C\left< T^{1/p^\infty} \right>$ is the perfectoid closed ball over an algebraically closed perfectoid field $C$, then $\tilde{B}_C$ is cohomologically smooth over $C$.

If $Y$ is a perfectoid space over an algebraically closed perfectoid field $C$, it seems quite difficult to detect whether $Y$ is cohomologically smooth over $C$.  We will review in Section \ref{JacobianCriterionPerfectoid} a ``Jacobian criterion'' from \cite{FarguesScholze} which applies to certain kinds of $Y$.  But first we give a classical analogue of this criterion in the context of schemes.

\subsection{The Jacobian criterion:  classical setting}

\begin{prop} \label{PropositionJacobianCriterion} Let $X$ be a smooth projective curve over an algebraically closed field $k$.  Let $Z\to X$ be a smooth morphism.   Define $\cM_Z$ to be the functor which inputs a $k$-scheme $T$ and outputs the set of sections of $Z\to X$ over $X_T$, that is, the set of morphisms $s$ making
\[
\xymatrix{
  &  Z \ar[d] \\
X \times_k T \ar[ur]^s \ar[r]  & X 
}
\]
commute, subject to the condition that, fiberwise on $T$, the vector bundle $s^*\Tan_{Z/X}$ has vanishing $H^1$. Then $\cM_Z \to \Spec k$ is formally smooth.
\end{prop}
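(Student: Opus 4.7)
Formal smoothness of $\cM_Z \to \Spec k$ amounts to the following lifting property: given any square-zero closed immersion of Noetherian affine $k$-schemes $T_0 \hookrightarrow T$ with ideal sheaf $\mathcal{I}$, and any section $s_0 \in \cM_Z(T_0)$, one must produce a lift $s \in \cM_Z(T)$. My plan is to reduce the existence of $s$ to the vanishing of a standard deformation-theoretic obstruction, verify that vanishing by cohomology and base change, and finally check that the cohomological condition defining $\cM_Z$ is automatic for the lift.

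First, view $s_0$ as a morphism $X \times_k T_0 \to Z$ over $X$. Since $Z \to X$ is smooth, the standard deformation theory for lifting a morphism to a smooth target identifies the obstruction to extending $s_0$ to an $X$-morphism $s \colon X \times_k T \to Z$ with a class in $H^1\bigl(X_{T_0},\, s_0^\ast \Tan_{Z/X} \otimes_{\OO_{T_0}} \mathcal{I}\bigr)$; when this class vanishes, the set of lifts is a torsor under $H^0\bigl(X_{T_0},\, s_0^\ast \Tan_{Z/X} \otimes_{\OO_{T_0}} \mathcal{I}\bigr)$. (Here tensoring with $\mathcal{I}$ is shorthand for tensoring the $\OO_{X_{T_0}}$-module $s_0^\ast \Tan_{Z/X}$ with the pullback of $\mathcal{I}$ along $\pi \colon X_{T_0} \to T_0$.)

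The main step is to show that this $H^1$ vanishes. Set $\mathcal{F} = s_0^\ast \Tan_{Z/X}$, a vector bundle on $X_{T_0}$ which is flat over $T_0$ since $X$ is flat over $k$. By hypothesis $H^1(X_t, \mathcal{F}|_{X_t}) = 0$ for every point $t \in T_0$. Applying cohomology and base change to the proper flat morphism $\pi \colon X_{T_0} \to T_0$ and the $T_0$-flat coherent sheaf $\mathcal{F}$ (and using that $X$ has relative dimension one, so $R\pi_\ast \mathcal{F}$ is computed by a two-term complex of finite free $\OO_{T_0}$-modules), one obtains $R^1 \pi_\ast \mathcal{F} = 0$, that $\pi_\ast \mathcal{F}$ is locally free, and the stronger universal statement that for every $\OO_{T_0}$-module $\mathcal{M}$ the natural map $\pi_\ast \mathcal{F} \otimes_{\OO_{T_0}} \mathcal{M} \to \pi_\ast(\mathcal{F} \otimes_{\OO_{T_0}} \mathcal{M})$ is an isomorphism and $R^i \pi_\ast(\mathcal{F} \otimes_{\OO_{T_0}} \mathcal{M}) = 0$ for $i \geq 1$. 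Taking $\mathcal{M} = \mathcal{I}$ yields $H^1(X_{T_0}, \mathcal{F} \otimes_{\OO_{T_0}} \mathcal{I}) = 0$, so a lift $s$ exists.

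Finally, any such $s$ lies automatically in $\cM_Z(T)$: the defining cohomological condition is a condition on the geometric fibers of $X_T \to T$, and since the nilpotent thickening $T_0 \hookrightarrow T$ is a homeomorphism on underlying topological spaces that preserves residue fields, these fibers coincide with those of $X_{T_0} \to T_0$; consequently $H^1(X_t, s^\ast \Tan_{Z/X}|_{X_t}) = H^1(X_t, s_0^\ast \Tan_{Z/X}|_{X_t}) = 0$ for every $t \in T$. The only nontrivial ingredient is the cohomology-and-base-change package, which is where the projectivity of $X$ and the hypothesis on $s_0^\ast \Tan_{Z/X}$ enter; I do not expect any real difficulty beyond correctly invoking it.
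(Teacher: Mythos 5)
Your proof is correct and follows essentially the same route as the paper: identify the obstruction to lifting along a square-zero thickening as a class in $H^1(X\times_k T_0,\, s_0^*\Tan_{Z/X}\otimes \pi_0^*\mathcal{I})$, kill it using the fiberwise $H^1$-vanishing via cohomology and base change (the paper packages this step as the projection formula together with the collapse of the Leray spectral sequence over the affine base $T_0$), and then note that the defining condition holds automatically for the lift because $T_0\hookrightarrow T$ has the same points. Your restriction to Noetherian test schemes is not imposed in the paper, but it is harmless here, since the same base-change argument (a two-term perfect complex computing $R\pi_*$ of the vector bundle $s_0^*\Tan_{Z/X}$ on the proper flat finitely presented family $X\times_k T_0\to T_0$) works over an arbitrary affine base.
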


Here $\Tan_{Z/X}$ is the tangent bundle, equal to the $\OO_Z$-linear dual of the sheaf of differentials $\Omega_{Z/X}$, which is locally free of finite rank.  Let $\pi\from X\times_k T \to T$ be the projection.  For $t\in T$, let $X_t$ be the fiber of $\pi$ over $t$, and let $s_t\from X_t\to Z$ be the restriction of $s$ to $X_t$.   By proper base change, the fiber of $R^1\pi_*s^*\Tan_{Z/X}$ at $t\in T$ is $H^1(X_t,s_t^*\Tan_{Z/X})$.  The condition about the vanishing of $H^1$ in the proposition is equivalent to $H^1(X_t,s_t^*\Tan_{Z/X})=0$ for each $t\in T$.  By Nakayama's lemma, this condition is equivalent to $R^1\pi_*s^*\Tan_{Z/X}=0$.  

\begin{proof}  Suppose we are given a commutative diagram
\begin{equation}
\label{EquationMZDiagram}
\xymatrix{
T_0 \ar[r] \ar[d] & \cM_Z \ar[d] \\
T \ar[r] & \Spec k,
}
\end{equation}
where $T_0\to T$ is a first-order thickening of affine schemes; thus $T_0$ is the vanishing locus of a square-zero ideal sheaf $I\subset \OO_T$.  Note that $I$ becomes an $\OO_{T_0}$-module.

The morphism $T_0\to \cM_Z$ in \eqref{EquationMZDiagram} corresponds to a section of $Z\to X$ over $T_0$.  Thus there is a solid diagram
\begin{equation}
\label{EquationZXdiagram}
\xymatrix{
X\times_k T_0 \ar[r]^{s_{0}} \ar[d] & Z \ar[d] \\
X\times_k T     \ar[r] \ar@{.>}[ur]^{s} \ar[r] & X.
}
\end{equation}
We claim that there exists a dotted arrow making the diagram commute.  
Since $Z\to X$ is smooth, it is formally smooth, and therefore this arrow exists Zariski-locally on $X$.  Let $\pi\from X\times_k T\to T$ and $\pi_0\from X\times_k T_0\to T_0$ be the projections.  Then $X\times_k T_0$ is the vanishing locus of the ideal sheaf $\pi^*I\subset \OO_{X\times_k T}$.  Note that sheaves of sets on $X\times_k T$ are equivalent to sheaves of sets on $X\times_k T_0$;  under this equivalence, $\pi^*I$ and $\pi_0^*I$ correspond.  By \cite[\href{http://stacks.math.columbia.edu/tag/04BU}{Remark 36.9.6}]{StacksProject}, the set of such morphisms form a (Zariski) sheaf of sets on $X\times_k T$, which when viewed as a sheaf on $X\times_k T_0$ is a torsor for 
\[\SHom_{\OO_{X\times_k T_0}}(s_{0}^*\Omega_{Z/X}, \pi_0^*I)\cong s_0^*\Tan_{Z/X} \otimes \pi_0^*I.\]
This torsor corresponds to class in 
\[H^1(X\times_k T_0, s_0^*\Tan_{Z/X}\otimes \pi_0^*I).\]
This $H^1$ is the limit of a spectral sequence with terms
\[ H^p(T_0, R^q\pi_{0*}(s_0^*\Tan_{Z/X}\otimes\pi_0^*I)).\]
But since $T_0$ is affine and $R^q\pi_{0*}(s_0^*\Tan_{Z/X}\otimes\pi_0^*I)$ is quasi-coherent, the above terms vanish for all $p>0$, and therefore
\[H^1(X\times_k T_0, s_0^*\Tan_{Z/X}\otimes \pi_0^*I)
\isom H^0(T_0,R^1\pi_{0*}(s_0^*\Tan_{Z/X}\otimes \pi_0^*I)).\]
Since $s_0^*\Tan_{Z/X}$ is locally free, we have $s_0^*\Tan_{Z/X}\otimes\pi_0^*I \isom s_0^*\Tan_{Z/X}\otimes^{\bL} \pi_{0*}I$, and we may apply the projection formula \cite[\href{http://stacks.math.columbia.edu/tag/08ET}{Lemma 35.21.1}]{StacksProject} to obtain 
\[ R\pi_{0*}(s_0^*\Tan_{Z/X}\otimes \pi_0^*I)\isom R\pi_{0*}s_0^*\Tan_{Z/X} \otimes^{\bL} I. \]
Now we apply the hypothesis about vanishing of $H^1$, which implies that $R\pi_{0*}s_0^*\Tan_{Z/X}$ is quasi-isomorphic to the locally free sheaf $\pi_{0*}s_0^*\Tan_{Z/X}$ in degree 0.  Therefore the complex displayed above has $H^1=0$.  

Thus our torsor is trivial, and so a morphism $s\from X\times_k T\to Z$ exists filling in \eqref{EquationZXdiagram}.  The final thing to check is that $s$ corresponds to a morphism $T\to \cM_Z$, i.e., that it satisfies the fiberwise $H^1=0$ condition.  But this is automatic, since $T_0$ and $T$ have the same schematic points.  
\end{proof}

In the setup of Proposition \ref{PropositionJacobianCriterion}, let $s\from X \times_k \cM_Z \to Z$ be the universal section.    That is, the pullback of $s$ along a morphism $T \to \cM_Z$ is the section $X\times_k T\to Z$ to which this morphism corresponds.   Let $\pi\from X\times_k \cM_Z\to \cM_Z$ be the projection. By Proposition \ref{PropositionJacobianCriterion} $\cM_Z\to \Spec k$ is formally smooth. There is an isomorphism 
\[ \pi_*s^*\Tan_{Z/X} \isom \Tan_{\cM_Z/\Spec k}. \]
Indeed, the proof of Proposition \ref{PropositionJacobianCriterion} shows that $\pi_*s^*\Tan_{Z/X}$ has the same universal property with respect to first order deformations as $\Tan_{\cM_Z/\Spec k}$.

The following example is of similar spirit as our main application of the perfectoid Jacobian criterion below. 

\begin{ex}
Let $X = \bP^1$ over the algebraically closed field $k$. For $d \in \bZ$, let $V_d = \underline{\smash{\mathrm{Spec}}}_X{\rm \Sym}_{\caO_X}(\caO(-d))$ be the geometric vector bundle over $X$ whose global sections are $\Gamma(X,\caO(d))$. Fix integers $n,d,\delta > 0$ and let $P$ be a homogeneous polynomial over $k$ of degree $\delta$ in $n$ variables. Then $P$ defines a morphism $P \colon \prod_{i=1}^n V_d \rar V_{d\delta}$, by sending sections $(s_i)_{i=1}^n$ of $V_d$ to the section $P(s_1,\dots,s_n)$ of $V_{d\delta}$. Fix a global section $f \colon X \rar V_{d\delta}$ to the projection morphism and consider the pull-back of $P$ along $f$:
\[
\xymatrix{
Z \ar@{^(->}[r] & P^{-1}(f) \ar[r] \ar[d] & X \ar[d]^f \ar[rd]^{\id_X}\\
& \prod_{i=1}^n V_d \ar[r]^P  & V_{d\delta} \ar[r] & X 
}
\]
Moreover, let $Z$ be the smooth locus of $P^{-1}(f)$ over $X$. It is an open subset. The derivatives $\frac{\partial P}{\partial x_i}$ of $P$ are homogeneous polynomials of degree $\delta - 1$ in $n$ variables, hence can be regarded as functions $\prod_{i=1}^n V_d \rightarrow V_{d(\delta - 1)}$. A point $y \in P^{-1}(f)$ lies in $Z$ if and only if $\frac{\partial P}{\partial x_i}(y)$, $i=1,\dots,n$ are not all zero.
We wish to apply Proposition \ref{PropositionJacobianCriterion} to $Z/X$. Let $\cM'_Z$ denote the space of global sections of $Z$ over $X$, that is for a $k$-scheme $T$, $\cM_Z'(T)$ is the set of morphisms $s \colon X \times _k T \rar Z$ as in the proposition (without any further conditions). A $k$-point $g \in \cM'_Z(k)$ corresponds to a section $g \colon X \rar \prod_{i=1}^n  V_d$, satisfying $P \circ g = f$. In general, for a (geometric) vector bundle $V$ on $X$ with corresponding locally free $\caO_X$-module $\cE$, the pull-back of  the tangent space $\mathrm{Tan}_{V/X}$ along a section $s \colon X \rar V$ is canonically isomorphic to $\cE$. Hence in our situation (using that $Z \subseteq P^{-1}(f)$ is open) the tangent space $g^{\ast} \mathrm{Tan}_{Z/X}$ can be computed from the short exact sequence,
\[
0 \rar g^{\ast} \mathrm{Tan}_{Z/X} \rar \bigoplus_{i=1}^n \caO(d) \stackrel{D_g P}{\longrar} \caO(d\delta) \rar 0,
\]
where $D_g P$ is the derivative of $P$ at $g$. It is the $\caO_X$-linear map given by $(t_i)_{i=1}^n \mapsto \sum_{i = 1}^n \frac{\partial P}{\partial x_i}(g)t_i$ (note that $\frac{\partial P}{\partial x_i}(g)$ are global sections of $\caO(d(\delta - 1))$). Note that $D_g P$ is surjective: by Nakayama, it suffices to check this fiberwise, where it is true by the condition defining $Z$.

The space $\cM_Z$ is the subfunctor of $\cM'_Z$ consisting of all $g$ such that (fiberwise) $g^{\ast}\mathrm{Tan}_{Z/X} = \mathrm{ker}(D_g P)$ has vanishing $H^1$. Writing $\mathrm{ker}(D_g P) = \bigoplus_{i=1}^r \caO(m_i)$ ($m_i \in \bZ$), this is equivalent to $m_i \geq -1$. By the Proposition \ref{PropositionJacobianCriterion} we conclude that $\cM_Z$ is formally smooth over $k$.

Consider now a numerical example. Let $n = 3$, $d = 1$ and $\delta = 4$ and let $g \in \cM'_Z(k)$. Then $D_g P \in \mathrm{Hom}_{\caO_X}(\caO(1)^{\oplus 3}, \caO(4)) = \Gamma(X,\caO(3)^{\oplus 3})$, a $12$-dimensional $k$-vector space, and moreover, $D_g P$ lies in the open subspace of surjective maps. We have the short exact sequence of $\caO_X$-modules
\begin{equation}\label{eq:derivative_numerical_example}
0 \rar g^{\ast} \mathrm{Tan}_{Z/X} \rar \caO(1)^{\oplus 3} \stackrel{D_g  P}{\longrar} \caO(4) \rar 0 
\end{equation}
This shows that $g^{\ast} \mathrm{Tan}_{Z/X}$ has rank $2$ and degree $-1$. Moreover, being a subbundle of $\caO(1)^{\oplus 3}$ it only can have slopes $\leq 1$. There are only two options, either $g^{\ast} \mathrm{Tan}_{Z/X} \cong \caO(-1) \oplus \caO$ or $g^{\ast} \mathrm{Tan}_{Z/X} \cong \caO(-2) \oplus \caO(1)$. The point $g$ lies in $\cM_Z$ if and only if the first option occurs for $g$. Which option occurs can be seen from the long exact cohomology sequence associated to \eqref{eq:derivative_numerical_example}:
\[0 \rar \Gamma(X, g^{\ast} \mathrm{Tan}_{Z/X}) \rar \underbrace{\Gamma(X, \caO(1))^{\oplus 3}}_{\text{$6$-dim'l}} \stackrel{\Gamma(D_g P)}{\longrar} \underbrace{\Gamma(X, \caO(4))}_{\text{$5$-dim'l}} \rar \coh^1(X, g^{\ast} \mathrm{Tan}_{Z/X}) \rar 0, 
\]
It is clear that $\Gamma(X, g^{\ast} \mathrm{Tan}_{Z/X})$ is $1$-dimesional if and only if $g^{\ast} \mathrm{Tan}_{Z/X} \cong \caO(-1) \oplus \caO$ and $2$-dimensional otherwise. The first option is generic, i.e., $\cM_Z$ is an open subscheme of $\cM'_Z$. 
\end{ex}

\subsection{The Jacobian criterion:  perfectoid setting}

We present here the perfectoid version of Proposition \ref{PropositionJacobianCriterion}.

\label{JacobianCriterionPerfectoid}

\begin{thm}[Fargues-Scholze {\cite{FarguesScholze}}]
\label{TheoremCohomologicalSmoothnessCriterion}
Let $S=\Spa(R,R^+)$ be an affinoid perfectoid space in characteristic $p$.  Let $Z\to X_S$ be a smooth morphism of schemes.  Let $\cM_Z^{>0}$ be the functor which inputs a perfectoid space $T\to S$ and outputs the set of sections of $Z\to X_S$ over $T$, that is, the set of morphisms $s$ making 
\[
\xymatrix{
  &  Z \ar[d] \\
X_T \ar[ur]^s \ar[r]  & X_S 
}
\]
commute, subject to the condition that, fiberwise on $T$, all Harder-Narasimhan slopes of the vector bundle $s^*\Tan_{Z/X_S}$ are positive. Then $\cM_Z^{>0}\to S$ is a cohomologically smooth morphism of locally spatial diamonds.  
\end{thm}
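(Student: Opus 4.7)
My plan is to imitate the classical Jacobian criterion of Proposition \ref{PropositionJacobianCriterion}, with the cohomology of vector bundles on the relative Fargues-Fontaine curve $X_T$ (i.e., Banach-Colmez spaces) replacing the usual coherent $H^0$ and $H^1$ on a smooth projective curve. Two perfectoid inputs drive the argument: (A) if $\caE$ is a vector bundle on $X_T$ all of whose Harder-Narasimhan slopes are strictly positive pointwise on $T$, then $H^1(X_T,\caE)$ vanishes v-locally on $T$ and the Banach-Colmez space $H^0(X_T,\caE)$ is v-locally a perfectoid open ball over $T$, in particular cohomologically smooth; and (B) the Harder-Narasimhan polygon is upper semicontinuous on $|T|$, so the condition ``all slopes of $s^{\ast}\Tan_{Z/X_S}$ are positive'' cuts out an open subfunctor of the space of all sections.

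First I would handle representability via an \'etale-local model for $Z$. Since $Z \to X_S$ is smooth, one may work Zariski-locally on $Z$ and arrange an \'etale $X_S$-morphism $Z \to \mathbb{A}^n_{X_S}$. The functor of sections of $\mathbb{A}^n_{X_S} \to X_S$ over perfectoid $T \to S$ is the $n$-fold product of the Banach-Colmez space $H^0(X_T,\OO_{X_T})$, which is known to be a locally spatial diamond over $S$. \'Etale pullback along $Z \to \mathbb{A}^n_{X_S}$ together with the open slope condition from (B) then exhibits $\cM_Z^{>0}$ as a locally spatial diamond over $S$.

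Next I would run the perfectoid analogue of the deformation-theoretic proof of Proposition \ref{PropositionJacobianCriterion}. For $s_0 \in \cM_Z^{>0}(T_0)$ and a ``square-zero'' thickening natural in this setting---say, working on $\caY_T$ along the divisor cut out by $\xi$, with $B_{\dR}^+/\xi^2$ in place of the classical square-zero ring---smoothness of $Z \to X_S$ implies that lifts of $s_0$ form, Zariski-locally on $X_{T_0}$, a torsor under $s_0^{\ast}\Tan_{Z/X_S}$ twisted by the ideal $I$ of the thickening. The obstruction to a global lift then lies in $H^1(X_{T_0}, s_0^{\ast}\Tan_{Z/X_S}\otimes I)$, which vanishes v-locally by input (A); the space of lifts is a torsor under the Banach-Colmez space $H^0(X_{T_0}, s_0^{\ast}\Tan_{Z/X_S})$, playing the role of the relative tangent space to $\cM_Z^{>0} \to S$ at $s_0$.

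To upgrade this infinitesimal statement to genuine cohomological smoothness, I would iterate the deformation argument along the successive thickenings by powers of $\xi$---a Newton-type scheme available because obstructions vanish at every order---and use v-descent to glue the resulting formal neighbourhoods into a genuine v-local open chart of $\cM_Z^{>0}$ inside the Banach-Colmez space $H^0(X_S, s_0^{\ast}\Tan_{Z/X_S})$. By (A) this Banach-Colmez space is v-locally a perfectoid open ball over $S$, hence cohomologically smooth; since cohomological smoothness is \'etale-local and inherited by open subdiamonds, $\cM_Z^{>0} \to S$ is cohomologically smooth. The main obstacle is precisely this last passage: turning order-by-order formal smoothness into an honest diamond chart by a perfectoid open ball. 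One needs a robust infinitesimal calculus on the Fargues-Fontaine curve, careful v-descent for the iterated lifts, and essential use of the \emph{strict} positivity of slopes (not merely non-negativity) to ensure both that the rank of $H^0$ is v-locally constant and that the resulting ball really is a cohomologically smooth perfectoid open ball over $S$.
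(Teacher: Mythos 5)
This theorem is not proved in the paper at all: it is imported from Fargues--Scholze \cite{FarguesScholze}, where its proof occupies an entire chapter, so the relevant comparison is with that argument, and your sketch has a genuine gap exactly at the step you yourself flag. In the category on which $\cM_Z^{>0}$ is defined (perfectoid spaces over $S$) there are no square-zero thickenings: perfectoid rings are reduced, and a $B_{\dR}^+(R)/\xi^2$-type thickening is not a test object for the functor, so ``formal smoothness'' in your sense is not even a well-posed property of $\cM_Z^{>0}$; and no infinitesimal lifting statement implies cohomological smoothness, which is an assertion about \'etale cohomology (invertibility of $Rf^!\Lambda$ and the compatibility $Rf^!\Lambda\otimes f^*\isom Rf^!$), not about tangent spaces. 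Moreover diamonds carry no formal structure along which order-by-order lifts could be glued or algebraized, so the proposed ``Newton iteration plus v-descent'' cannot produce an honest v-local chart of $\cM_Z^{>0}$ by a perfectoid ball; there is no Artin-approximation step available here, and this passage is precisely where the content of the theorem lies. The actual proof is structured quite differently: after a delicate representability argument (which needs $Z$ to be covered by opens quasi-projective over $X_S$ --- your \'etale-local reduction to $\bA^n_{X_S}$ is already problematic, since a section of $Z\to X_S$ over $X_T$ need not factor through a prescribed chart, so the moduli of sections does not localize naively on $Z$), cohomological smoothness is established by degenerating $Z$, near a given section $s$, to the geometric vector bundle of $s^*\Tan_{Z/X_S}$ (deformation to the normal cone), thereby reducing to the ``linear'' Banach--Colmez case, together with a careful dualizing-complex analysis; none of this is recovered by transporting the proof of Proposition \ref{PropositionJacobianCriterion}, which in the classical setting only ever yields formal smoothness.

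Two further inaccuracies in the inputs: in (A), a vector bundle all of whose slopes lie in $(0,1]$ does have $H^0$ an open perfectoid ball, but for slopes $>1$ (already for $\OO_{X_C}(2)$) the Banach--Colmez space $H^0$ is a cohomologically smooth diamond that is \emph{not} a perfectoid open ball, even v-locally; what is true, and is itself a nontrivial theorem used in the linear case, is cohomological smoothness of positive Banach--Colmez spaces. Your point (B), openness of the positive-slope locus via semicontinuity of the Harder--Narasimhan polygon, is correct, but it only cuts out $\cM_Z^{>0}$ inside the space of all sections once representability of the latter has been established, which is the part your sketch treats most lightly.
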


\begin{ex} Let $S=\eta=\Spa(C,\OO_C)$, where $C$ is an algebraically closed perfectoid field of characteristic $0$, and let $Z=\bV(\caE_S(H))\to X_S$ be the geometric vector bundle attached to $\caE_S(H)$, where $H$ is a $p$-divisible group over the residue field of $C$.  Then $\cM_Z=H^0(\caE_S(H))$ is isomorphic to $\tilde{H}_\eta^{\ad}$ by Proposition \ref{BanachColmezSpace}.  Let $s\from X_{\cM_Z}\to Z$ be the universal morphism;  then $s^*\Tan_{Z/X_S}$ is the constant Banach-Colmez space associated to $H$ (i.e., the pull-back of $\caE_S(H)$ along $X_{\cM_Z} \rar X_S$).  This has vanishing $H^1$ if and only if $H$ has no \'etale part.  This is true if and only if $\cM_Z^{>0}$ is isomorphic to a perfectoid open ball.  The perfectoid open ball is cohomologically smooth, in accord with Theorem \ref{TheoremCohomologicalSmoothnessCriterion}.  In contrast, if the \'etale quotient $H^{\et}$ has height $d>0$, then $\pi_0(\tilde{H}_{\eta}^{\ad})\isom \Q_p^d$ implies that $\tilde{H}_{\eta}^{\ad}$ is not cohomologically smooth.
\end{ex}

In the setup of Theorem \ref{TheoremCohomologicalSmoothnessCriterion}, suppose that $x= \Spa(C,\OO_C)\to S$ is a geometric point, and that $x\to \cM_Z^{>0}$ is an $S$-morphism, corresponding to a section $s\from X_C\to Z$.  Then $s^*\Tan_{Z/X_S}$ is a vector bundle on $X_C$.  In light of the discussion in the previous section, we are tempted to interpret $H^0(X_C,s^*\Tan_{Z/X_S})$ as the ``tangent space of $\cM_Z^{>0}\to S$ at $x$''.  At points $x$ where $s^*\Tan_{Z/X_S}$ has only positive Harder-Narasimhan slopes, this tangent space is a perfectoid open ball.  

\section{Proof of the main theorem}
\label{ProofOfMainTheorem}

\subsection{Dilatations and modifications}
\label{SectionDilatations}
As preparation for the proof of Theorem \ref{MainTheorem}, we review the notion of a dilatation of a scheme at a locally closed subscheme \cite[\S 3.2]{NeronModels}.  

Throughout this subsection, we fix some data.  Let $X$ be a curve, meaning that $X$ is a scheme which is locally the spectrum of a Dedekind ring.  Let $\infty\in X$ be a closed point with residue field $C$.  Let $i_\infty\from \Spec C \to X$ be the embedding, and let $\xi\in \OO_{X,\infty}$ be a local uniformizer at $\infty$.

\begin{prop} Let $V\to X$ be a morphism of finite type, and let $Y\subset V_\infty$ be a locally closed subscheme of the fiber of $V$ at $\infty$.

There exists a morphism of $X$-schemes $V'\to V$ 
which is universal for the following property: $V'\to X$ is flat at $\infty$, and $V_\infty'\to V_\infty$ factors through $Y\subset V_\infty$.  
\end{prop}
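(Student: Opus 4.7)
The plan is to construct $V'$ by an explicit Zariski-local formula (the classical \emph{dilatation} from N\'eron model theory, cf.\ \cite[\S3.2]{NeronModels}), check the universal property on affine opens, and glue. First I would reduce to the case that $V = \Spec A$ is affine and $Y$ is \emph{closed} in $V$ and cut out by finitely many global functions. Since $V_\infty$ is closed in $V$, the locally closed $Y \subset V_\infty$ is locally closed in $V$, so after shrinking $V$ (removing $\overline{Y}\setminus Y$ from $V_\infty$) we may assume $Y$ is closed in $V$; then working on affine opens, $Y$ is cut out by finitely many $f_1,\dots,f_n \in A$, and because $Y \subset V_\infty$ we may always enlarge the generating set to include $\xi$, so that $I = (f_1,\dots,f_n,\xi)$ is the ideal of $Y$ in $A$.

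Next I would give the construction: let $A'$ be the sub-$A$-algebra of $A[1/\xi]$ generated by the elements $f_i/\xi$, and put $V' = \Spec A'$. Because $A' \subset A[1/\xi]$, multiplication by $\xi$ is injective on $A'$, hence $V' \to X$ is flat at $\infty$. The relations $f_i = \xi \cdot (f_i/\xi)$ force each $f_i$ to lie in $\xi A'$, so the induced map $A/I \to A'/\xi A'$ is well defined, i.e.\ $V'_\infty \to V_\infty$ factors through $Y$. For the universal property, suppose $g\colon A\to B$ is a test map with $B$ flat at $\infty$ (so $\xi$ a non-zero-divisor in $B$) and with $g(f_i) \in \xi B$ for all $i$. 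Then each $g(f_i)/\xi$ exists uniquely in $B$, and assigning $f_i/\xi \mapsto g(f_i)/\xi$ extends $g$ to a unique ring map $A' \to B$; this is the desired unique factorization.

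After this I would glue. Because the construction is characterized by a universal property, any two affine choices of generators or any two affine charts produce canonically isomorphic dilatations on overlaps, so the local pieces glue to a global $V' \to V$ inheriting the same universal property. Finite type of $V' \to X$ is immediate since $A'$ is generated over $A$ by the finitely many elements $f_i/\xi$.

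The main obstacle is not conceptual but bookkeeping: one must check that $A' = A[f_1/\xi,\dots,f_n/\xi]$ is independent of the chosen generators of $I$ (which follows from the universal property applied to $V'$ itself), that the construction commutes with restriction to Zariski opens of $V$ (so the gluing is canonical), and that the reduction of the locally closed case to the closed case is harmless (which is automatic because the test objects $W$ satisfying the hypotheses are forced, by the flatness and special-fiber conditions, to miss $\overline{Y}\setminus Y$ in a neighborhood of $\infty$). Once the affine universal property is in hand, all of these are formal.
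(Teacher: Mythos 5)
Your proposal is correct and is essentially the paper's own argument: the paper (citing \cite[\S 3.2]{NeronModels}) packages the global construction as the open subscheme of the blow-up of $V$ along $Y$ where the ideal becomes generated by $\xi$, but its explicit local description $R'=R[f_1/\xi,\dots,f_n/\xi]/(\xi\text{-torsion})$ is exactly your affine charts, and it reduces the locally closed case to the closed case by the same device of deleting $\overline{Y}\setminus Y$ (your justification is right, and in fact a test object misses $\overline{Y}\setminus Y$ entirely, since that set lies in $V_\infty$ while $T_\infty$ lands in $Y$). Your direct verification of the affine universal property and the gluing is precisely the standard argument the paper delegates to the reference.
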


The $X$-scheme $V'$ is the {\em dilatation} of $V$ at $Y$.  We review here its construction.

First suppose that $Y\subset V_\infty$ is closed.   Let $\cI\subset \OO_V$ be the ideal sheaf which cuts out $Y$.  Let $B\to V$ be the blow-up of $V$ along $Y$.   Then $\cI\cdot \OO_B$ is a locally principal ideal sheaf.  The dilatation $V'$ of $V$ at $Y$ is the open subscheme of $B$ obtained by imposing the condition that the ideal $(\cI\cdot\OO_B)_x\subset \OO_{B,x}$ is generated by $\xi$ at all $x\in B$ lying over $\infty$.  

We give here an explicit local description of the dilatation $V'$.  Let $\Spec A$ be an affine neighborhood of $\infty$, such that $\xi\in A$, and let $\Spec R\subset V$ be an open subset lying over $\Spec A$.  Let $I=(f_1,\dots,f_n)$ be the restriction of $\cI$ to $\Spec R$, so that $I$ cuts out $Y\cap \Spec A$.  Then the restriction of $V'\to V$ to $\Spec R$ is $\Spec R'$, where 
\[ R'=R\left[\frac{f_1}{\xi},\dots,\frac{f_n}{\xi}\right]/(\xi\text{-torsion}). \]

Now suppose $Y\subset V_\infty$ is only locally closed, so that $Y$ is open in its closure $\overline{Y}$. Then the dilatation of $V$ at $Y$ is the dilatation of $V\backslash (\overline{Y}\backslash Y)$ at $Y$.

Note that a dilatation $V'\to V$ is an isomorphism away from $\infty$, and that it is affine.

\begin{ex}
\label{ExampleDilatationModification}
Let 
\[ 0 \to \caE' \to \caE \to i_{\infty *} W \to 0\]
be an exact sequence of $\OO_X$-modules, where $\caE$ (and thus $\caE'$) is locally free, and $W$ is a $C$-vector space.  (This is an elementary modification of the vector bundle $\caE$.)  Let $K=\ker(\caE_\infty\to W)$.  

Let $\bV(\caE)\to X$ be the geometric vector bundle corresponding to $\caE$.  Similarly, we have $\bV(\caE')\to X$, and an $X$-morphism $\bV(\caE')\to \bV(\caE)$.  Let $\bV(K)\subset \bV(\caE)_\infty$ be the affine space associated to $K\subset \caE_\infty$.   We claim that $\bV(\caE')$ is isomorphic to the dilatation $\bV(\caE)'$ of $\bV(\caE)$ at $\bV(K)$.  Indeed, by the universal property of dilatations, there is a morphism $\bV(\caE')\to \bV(\caE)'$, which is an isomorphism away from $\infty$.   

To see that $\bV(\caE')\to \bV(\caE)'$ is an isomorphism, it suffices to work over $\OO_{X,\infty}$.  Over this base, we may give a basis $f_1,\dots,f_n$ of global sections of $\caE$, with $f_1,\dots,f_k$ lifting a basis for $K\subset \caE_\infty$.  Then the localization of $\bV(\caE)'\to \bV(\caE)$ at $\infty$ is isomorphic to 
\[ \Spec \OO_{X,\infty}\left[\frac{f_1}{\xi},\dots,\frac{f_k}{\xi},f_{k+1},\cdots,f_n\right] \to \Spec \OO_{X,\infty}[f_1,\dots,f_n]. \]
This agrees with the localization of $\bV(\caE')\to \bV(\caE)$ at $\infty$.  
\end{ex}

\begin{lm}
\label{LemmaTangentSpaceOfDilatation} Let $V\to X$ be a smooth morphism, let $Y\subset V_\infty$ be a smooth locally closed subscheme, and let $\pi\from V'\to V$ be the dilatation of $V$ at $Y$.  Then $V'\to X$ is smooth, and $\Tan_{V'/X}$ lies in an exact sequence of $\OO_{V'}$-modules
\begin{equation}
\label{EquationTanVXSequence}
 0 \to \Tan_{V'/X}\to \pi^*\Tan_{V/X} \to \pi^*j_*N_{Y/V_\infty}\to 0,
\end{equation}
where $N_{Y/V_\infty}$ is the normal bundle of $Y\subset V_\infty$, and $j\from Y\to V$ is the inclusion. 

Finally, let $T\to X$ be a morphism which is flat at $\infty$, and let $s\from T\to V$ be a morphism of $X$-schemes, such that $s_\infty$ factors through $Y$.  By the universal property of dilatations, $s$ factors through a morphism $s'\from T\to V'$.  Then we have an exact sequence of $\OO_V$-modules
\begin{equation}
\label{EquationTanVXSequencePullBack}
0 \to (s')^*\Tan_{V'/X} \to s^*\Tan_{V/X} \to i_{T_\infty*}s_\infty^*N_{Y/V_\infty} \to 0.
\end{equation}
\end{lm}

\begin{proof} One reduces to the case that $Y$ is closed in $V_\infty$. The smoothness of $V'\to X$ is \cite[\S 3.2, Proposition 3]{NeronModels}.  We turn to the exact sequence \eqref{EquationTanVXSequence}.  The morphism $\Tan_{V'/X}\to \pi^*\Tan_{V/X}$ comes from functoriality of the tangent bundle.  To construct the morphism $\pi^*\Tan_{V/X}\to \pi^*j_*N_{Y/V_\infty}$, we consider the diagram
\[
\xymatrix{
V_{\infty}'\ar[dd]_{i_{V'}} \ar[r]^{\pi_\infty'} \ar[dr]_{\pi_\infty} & Y \ar[d]^{i_Y} \ar@/^2.0pc/[dd]^j \\
& V_\infty \ar[d]^{i_V} \\
V' \ar[r]_{\pi} & V
}
\]
in which the outer rectangle is cartesian.  For its part, the normal bundle $N_{Y/V_\infty}$ sits in an exact sequence of $\OO_Y$-modules
\[ 0 \to \Tan_{Y/C}\to i_Y^*\Tan_{V_\infty/C} \to N_{Y/V_\infty} \to 0.\]
The composite
\begin{eqnarray*}
 i_{V'}^*\pi^*\Tan_{V/X} 
 &=&
 \pi_\infty^* i_V^* \Tan_{V/X}\\
 &\isom& \pi_\infty^* \Tan_{V_\infty/C} \\
 &=& (\pi_\infty')^* i_Y^* \Tan_{V/C} \\
 &\to& (\pi_\infty')^* N_{Y/V_\infty}
 \end{eqnarray*} 
 induces by adjunction a morphism 
 \[ \pi^*\Tan_{V/X}\to i_{V'*}(\pi_\infty')^* N_{Y/V_\infty} \isom \pi^*j_* N_{Y/V_\infty},\]
where the last step is justified because $j$ is a closed immersion.  

We check that \eqref{EquationTanVXSequence} is exact using our explicit description of $V'$.  The sequence is clearly exact away from the preimage of $Y$ in $V'$, since on the complement of this locus, the morphism $\pi$ is an isomorphism, and $\pi^*j_*=0$.  Therefore we let $y\in Y$ and check exactness after localization at $y$.   Let $\caI\subset \OO_V$ be the ideal sheaf which cuts out $Y$, and let $I\subset \OO_{V,y}$ be the localization of $\caI$ at $y$.  Then $\OO_{V_\infty,y}=\OO_{V,y}/\xi$.  Since $Y\subset V_\infty$ are both smooth at $y$, we can find a system of local coordinates $\overline{f}_1,\dots,\overline{f}_n\in \OO_{V_\infty,y}$ (meaning that the differentials $d\overline{f}_i$ form a basis for $\Omega^1_{V_{\infty}/C,y}$), such that $\overline{f}_{k+1},\dots,\overline{f}_n$ generate $I/\xi$.  If $\partial/\partial \overline{f}_i$ are the dual basis, then the stalk of $N_{Y/V_\infty}$ at $y$ is the free $\OO_{Y,y}$-module with basis $\partial/\partial\overline{f}_{k+1},\dots,\partial/\partial\overline{f}_n$.

Choose lifts $f_i\in \OO_{V,y}$ of the $\overline{f}_i$.  Then $I$ is generated by $\xi,f_k,\dots,f_n$.  The localization of $V'\to V$ over $y$ is $\Spec \OO_{V',y}$, where $\OO_{V',y}=\OO_{V,y}[g_{k+1},\dots,g_n]/(\xi\text{-torsion})$, where $\xi g_i=f_i$ for $i=k+1,\dots,n$.  Then the stalk of $\Tan_{V'/X}$ at $y$ is the free $\OO_{V',y}$-module with basis $\partial/\partial f_1,\dots,\partial/\partial f_k,\partial/\partial g_{k+1},\dots,\partial/\partial g_n$, whereas the stalk of $\pi^*\Tan_{V/X}$ at $y$ is the free $\OO_{V',y}$-module with basis $\partial/\partial f_1,\dots,\partial/ \partial f_n$.  The quotient between these stalks is evidently the free module over $\OO_{V',y}/\xi$ with basis $\partial/\partial f_{k+1},\dots,\partial /\partial f_n$, and this agrees with the stalk of $\pi^*j_*N_{Y/V_\infty}$.  

Given a morphism of $X$-schemes $s\from T\to V$ as in the lemma, we apply $(s')^*$ to \eqref{EquationTanVXSequence};  this is exact because $s'$ is flat.  The term on the right is $s^*j_*N_{Y/V_\infty}\isom i_{T_\infty *}s_\infty^* N_{Y/V_\infty}$ (once again, this is valid because $j$ is a closed immersion).
\end{proof}

\subsection{The space $\cM_{H,\infty}$ as global sections of a scheme over $X_C$}
\label{SectionTheSpacesMH}

We will prove Theorem \ref{MainTheorem} for the Rapoport-Zink spaces of the form $\cM_{H,\infty}$ before proceeding to the general case.  Let $H$ be a $p$-divisible group of height $n$ and dimension $d$ over a perfect field $k$.  In this context, $\breve{E}=W(k)[1/p]$.  Let $\caE=\caE_C(H)$.  Throughout, we will be interpreting $\cM_{H,\infty}$ as a functor on $\Perf_{\breve{E}}$ as in Proposition \ref{DescriptionOfMH}.  

We have a determinant morphism $\det\from \cM_{H,\infty}\to \cM_{\det H, \infty}$.   Let $\tau\in \cM_{\det H,\infty}(C)$ be a geometric point of $\cM_{\det H, \infty}$.  This point corresponds to a section $\tau$ of $\bV(\det\caE)\to X_C$, which we also call $\tau$.  Let $\cM_{H,\infty}^{\tau}$ be the fiber of $\det$ over $\tau$.  

Our first order of business is to express $\cM_{H,\infty}^\tau$ as the space of global sections of a smooth morphism $Z\to X_C$, defined as follows.  We have the geometric vector bundle $\bV(\caE^n)\to X$, whose global sections parametrize morphisms $s\from \OO_{X_C}^n\to \caE$. Let $U_{n-d}$ be the locally closed subscheme of the fiber of $\bV(\caE^n)$ over $\infty$, which parametrizes all morphisms of rank $n-d$. We consider the dilatation $\bV(\caE^n)^{\rk_\infty = n-d} \rar \bV(\caE^n)$ of $\bV(\caE^n)$ along $U_{n-d}$.  For any flat $X_C$-scheme $T$, $\bV(\caE^n)^{\rk_\infty = n-d}(T)$ is the set of all $s \colon \caO_T^n \rar \caE_T$ such that ${\rm cok}(s) \otimes C$  is projective $\caO_T \otimes C$-module of rank $d$.
Define $Z$ as the Cartesian product:
\begin{equation}
\label{EquationDiagramZ}
\xymatrix{
Z \ar[d] \ar[r] &
\bV(\caE^n)^{\rk_\infty = n-d} \ar[d]^{\det} \\
X_C \ar[r]_(0.4){\tau} & \bV(\det \caE).
}
\end{equation}

\begin{lm}\label{lm:MZ_and_MHinfty}  Let  $\cM_Z$ be the functor which inputs a perfectoid space $T/C$ and outputs the set of sections of $Z\to X_C$ over $X_T$.  Then $\cM_Z$ is isomorphic to $\cM_{H,\infty}^\tau$.  
\end{lm}

\begin{proof}  Let $T=\Spa(R,R^+)$ be an affinoid perfectoid space over $C$.  The morphism $X_T\to X_C$ is flat.  (This can be checked locally:  $B_{\dR}^+(R)$ is torsion-free over the discrete valuation ring $B_{\dR}^+(C)$, and so it is flat.)  By the description in \eqref{EquationDiagramZ}, an $X_T$-point of $\cM_Z$ corresponds to a morphism $\sigma\from \OO_{X_T}^n\to \caE_T(H)$ which has the properties:
\begin{enumerate}
\item[(1)] The cokernel of $\sigma_\infty$ is a projective $R$-module quotient of $\caE_T(H)_\infty$ of rank $d$.
\item[(2)] The determinant of $\sigma$ equals $\tau$.  
\end{enumerate}
On the other hand, by Proposition \ref{DescriptionOfMH}, $\cM_{H,\infty}(T)$ is the set of morphisms $\sigma\from \OO_{X_T}^n\to \caE_T(H)$ satisfying
\begin{enumerate}
\item[($1'$)] The cokernel of $\sigma$ is $i_{\infty*} W$, for a projective $R$-module quotient $W$ of $\caE_T(H)_\infty$ of rank $d$.
\item[($2$)] The determinant of $\sigma$ equals $\tau$.
\end{enumerate}
We claim the two sets of conditions are equivalent for a morphism $\sigma\from \OO_{X_T}^n\to \caE_T(H)$.  
Clearly ($1'$) implies (1), so that ($1'$) and ($2$) together imply (1) and (2) together.  Conversely, suppose (1) and (2) hold.  Since $\tau$ represents a point of $\cM_{\det H,\infty}$, it is an isomorphism outside of $\infty$, and therefore so is $\sigma$.  This means that $\cok \sigma$ is supported at $\infty$.  Thus $\cok \sigma$ is a $B_{\dR}^+(R)$-module.  For degree reasons, the length of $(\cok \sigma)\otimes_{B_{\dR}^+(R)} B_{\dR}^+(C')$ has length $d$ for every geometric point $\Spa(C',(C')^+)\to T$.    Whereas condition (1) says that $(\cok \sigma)\otimes_{B_{\dR}^+(R)} R$ is a projective $R$-module of rank $d$.   This shows that $(\cok \sigma)$ is already a projective $R$-module of rank $d$, which is condition ($1'$).  
\end{proof}

\begin{lm}
The morphism $Z\to X_C$ is smooth. 
\end{lm}

\begin{proof}

Let $\infty'\in X_C$ be a closed point, with residue field $C'$.  It suffices to show that the stalk of $Z$ at $\infty'$ is smooth over $\Spec B_{\dR}^+(C')$.

If $\infty'\neq \infty$, then this stalk is isomorphic to the variety $(\bA^{n^2})^{\det = \tau}$ consisting of $n\times n$ matrices with fixed determinant $\tau$.  Since $\tau$ is invertible in $B_{\dR}^+(C')$, this variety is smooth.

Now suppose $\infty'=\infty$.  Let $\xi$ be a generator for the kernel of $B_{\dR}^+(C)\to C$.  Then the stalk of $Z$ at $\infty$ is isomorphic to the flat $B_{\dR}^+(C)$-scheme $Y$, whose $T$-points for a flat $B_{\dR}^+(C)$-scheme $T$ are $n\times n$ matrices with coefficients in $\Gamma(T,\OO_T)$, which are rank $n-d$ modulo $\xi$, and which have fixed determinant $\tau$ (which must equal $u\xi^d$ for a unit $u\in B_{\dR}^+(C)$).  Consider the open subset $Y_0\subset Y$ consisting of matrices $M$ where the first $(n-d)$ columns have rank $(n-d)$.  Then the final $d$ columns of $M$ are congruent modulo $\xi$ to a linear combination of the first $(n-d)$ columns.  After row reduction operations only depending on those first $(n-d)$ columns, $M$ becomes
\[
\left(
\begin{array}{c|c}
I_{n-d} & P \\
\hline
0 & \xi Q
\end{array}
\right),
\]
with $\det Q=w$ for a unit $w\in B_{\dR}^+(C)$ which only depends on the first $(n-d)$ columns of $M$.  We therefore have a fibration $Y_0\to \bA^{n(n-d)}$, namely projection onto the first $(n-d)$ columns, whose fibers are $\bA^{d(n-d)} \times (\bA^{d^2})^{\det = w}$, which is smooth.  Therefore $Y_0$ is smooth.  The variety $Y$ is covered by opens isomorphic to $Y_0$, and so it is smooth.
\end{proof}

We intend to apply Theorem \ref{TheoremCohomologicalSmoothnessCriterion} to the morphism $Z\to X$, and so we need some preparations regarding the relative tangent space of $\bV(\caE^n)^{\rk_\infty= n-d}\to X_C$.

\subsection{A linear algebra lemma}
Let $f\from V'\to V$ be a rank $r$ linear map between $n$-dimensional vector spaces over a field $C$. Thus there is an exact sequence
\[ 0 \to W'\to V' \stackrel{f}{\to} V \stackrel{q}{\to} W \to 0.\]
with $\dim W=\dim W'=n-r$. 

Consider the minor map $\Lambda\from \Hom(V',V)\to \Hom(\bigwedge^{r+1}V',\bigwedge^{r+1}V)$ given by $\sigma\mapsto \bigwedge^{r+1} \sigma$.  This is a polynomial map, whose derivative at $f$ is a linear map
\[
\label{EquationDfF}
D_f\Lambda\from \Hom(V',V) \to \Hom\left(\bigwedge^{r+1}V',\bigwedge^{r+1} V\right).\]
Explicitly, this map is 
\[ 
\label{EquationDfF}
D_f\Lambda(\sigma)(v_1\wedge\cdots \wedge v_{r+1}) =
\sum_{i=1}^{r+1} f(v_1)\wedge f(v_2)\wedge \cdots \wedge \sigma(v_i) \wedge \cdots \wedge f(v_{r+1}).\]

\begin{lm} \label{LemmaMinorMap} Let 
\[ K=\ker\left(\Hom(V',V) \to \Hom(W',W)\right)\]
be the kernel of the map $\sigma\mapsto q\circ (\sigma\vert_{W'})$.
Then $\ker D_f\Lambda = K$.
\end{lm}

\begin{proof}  Suppose $\sigma\in K$.  Since $f$ has rank $r$, the exterior power $\bigwedge^{r+1} V'$ is spanned over $C$ by elements of the form $v_1\wedge\cdots\wedge v_{r+1}$, where $v_{r+1}\in \ker f = W'$.  Since $f(v_{r+1})=0$, the sum in \eqref{EquationDfF} reduces to 
\[ D_f\Lambda(\sigma)(v_1\wedge\cdots\wedge v_{r+1}) = f(v_1)\wedge \cdots \wedge f(v_r) \wedge \sigma(v_{r+1}). \]
Since $\sigma\in K$ and $v_{r+1}\in W'$ we have $\sigma(v_{r+1})\in \ker q=f(V')$, which means that $D_f\Lambda(\sigma)(v_1,\dots,v_{r+1})\in \bigwedge^{r+1}f(V')=0$.  Thus $\sigma\in \ker D_f\Lambda$.

Now suppose $\sigma\in \ker D_f\Lambda$.   Let $w\in W'$.  We wish to show that $\sigma(w)\in f(V')$. Let $v_1,\dots,v_r\in V'$ be vectors for which $f(v_1),\cdots,f(v_r)$ is a basis for $f(V')$.  Since $\sigma\in \ker D_f\Lambda$, we have $D_f\Lambda(\sigma)(v_1\wedge\cdots\wedge v_r\wedge w)=0$.  On the other hand, 
\[ D_f\Lambda(\sigma)(v_1\wedge \cdots\wedge v_r\wedge w) = f(v_1)\wedge\cdots \wedge f(v_r) \wedge \sigma(w), \]
because all other terms in the sum in \eqref{EquationDfF} are 0, owing to $f(w)=0$.  Since the wedge product above is 0, and the $f(v_i)$ are a basis for $f(V')$, we must have $\sigma(w)\in f(V')$.  Thus $\sigma\in K$.
\end{proof}

We interpret Lemma \ref{LemmaMinorMap} as the calculation of a certain normal bundle.  Let $Y=\bV(\Hom(V',V))$ be the affine space over $C$ representing morphisms $V'\to V$ over a $C$-scheme, and let $j\from Y^{\rk =r}\to Y$ be the locally closed subscheme representing morphisms which are everywhere of rank $r$.   Thus, $Y^{\rk = r}$ is an open subset of the fiber over 0 of (the geometric version of) the minor map $\Lambda$.  It is well known that $Y^{\rk = r}/C$ is smooth of codimension $(n-r)^2$ in $Y/C$, and so the normal bundle $N_{Y^{\rk= r}/Y}$ is locally free of this rank.  

We have a universal morphism of $\OO_{Y^{\rk=r}}$-modules $\sigma\from \OO_{Y^{\rk = r}}\otimes_C V'\to \OO_{Y^{\rk = r}}\otimes_C V$.  Let  $\caW'=\ker \sigma$ and $\caW=\cok \sigma$, so that $\caW'$ and $\caW$ are locally free $\OO_{Y^{\rk = r}}$-modules of rank $n-r$. We also have the $\OO_{Y^{\rk = r}}$-linear morphism $D\Lambda \colon \OO_{Y^{\rk=r}} \otimes_C \Hom(V',V) \rar \OO_{Y^{\rk=r}} \otimes_C \Hom(\Lambda^{r+1}V',\Lambda^{r+1}V)$, whose kernel is precisely $\Tan_{Y^{\rk=r}/C}$. The geometric interpretation of Lemma \ref{LemmaMinorMap} is a commutative diagram with short exact rows:
\begin{equation}
\label{EquationNormalBundle}
\xymatrix{
\ker D\Lambda \ar[r] \ar[d]_{\isom} & \OO_{Y^{\rk=r}} \otimes_C \Hom(V',V) \ar[r] \ar[d]_{\isom} & 
\SHom(\caW',\caW)
\ar[d]^{\isom} \\
\Tan_{Y^{\rk=r}/C} \ar[r]& j^*\Tan_{Y/C} \ar[r] & N_{Y^{\rk=r}/Y}.
}
\end{equation}

\subsection{Moduli of morphisms of vector bundles with fixed rank at $\infty$}
We return to the setup of \S\ref{SectionDilatations}.  We have a curve $X$ and a closed point $\infty\in X$, with inclusion map $i_\infty$ and residue field $C$.

Let $\caE$ and $\caE'$ be rank $n$ vector bundles over $X$, with fibers $V=\caE_\infty$ and $V'=\caE'_\infty$.   We have the geometric vector bundle $\bV(\SHom(\caE',\caE))\to X$.  If $f\from T\to X$ is a morphism, then $T$-points of $\bV(\SHom(\caE',\caE))$ classify $\OO_T$-linear maps $f^*\caE'\to f^*\caE$.  

Let $\bV(\SHom(\caE',\caE))^{\rk_\infty = r}$ be the dilatation of $\bV(\SHom(\caE',\caE))$ at the locally closed subscheme $\bV(\Hom(V',V))^{\rk =r}$ of the fiber $\bV(\SHom(\caE',\caE))_\infty=\bV(\Hom(V',V))$.  This has the following property, for a flat morphism $f\from T\to X$:  the $X$-morphisms
$s\from T\to\bV(\SHom(\caE',\caE))^{\rk_\infty = r}$ parametrize those $\OO_T$-linear maps $\sigma\from f^*\caE'\to f^*\caE$, for which 
the fiber $\sigma_\infty\from f_\infty^*V'\to f_\infty^*V$ has rank $r$ everywhere on $T_\infty$.

Given a morphism $s$ as above, corresponding to a morphism $\sigma\from f^*\caE'\to f^*\caE$, we let $\caW'$ and $\caW$ denote the kernel and cokernel of $\sigma_\infty$.  Then $\caW'$ and $\caW$ are locally free $\OO_{T_\infty}$-modules of rank $r$.  Let $i_{T_\infty}\from T_\infty\to T$ denote the pullback of $i_\infty$ through $f$.  

We intend to use Lemma \ref{LemmaTangentSpaceOfDilatation} to compute
$s^*\Tan_{\bV(\SHom(\caE',\caE))^{\rk_\infty =r}/X}$.  The tangent bundle $\Tan_{\bV(\SHom(\caE',\caE))/X}$ is isomorphic to the pullback $f^*\SHom(\caE',\caE)$.  Also, we have identified the normal bundle $N_{\bV(\Hom(V',V))^{\rk =r}/\bV(\Hom(V',V)}$ in \eqref{EquationNormalBundle}.  So when we apply the lemma to this situation, we obtain an exact sequence of $\OO_T$-modules
\begin{equation}
\label{EquationIdentificationOfKerLambda}
 0 \to s^*\mathrm{Tan}_{\bV(\SHom(\caE',\caE))^{\rk_\infty= r}/X} \to f^*\SHom(\caE',\caE) \to i_{T_\infty *}\SHom(\caW', \caW) \to 0,
\end{equation}
where the third arrow is adjoint to the map 
\[ i_{T_\infty}^*f^*\SHom(\caE',\caE)=
\Hom(f_\infty^*V',f_\infty^*  V)\to \SHom(\caW',\caW), \]
which sends $\sigma\in\SHom(f_\infty^*V',f_\infty^*V)$ to the composite
\[ \caW'\to f_\infty^*V' \stackrel{\sigma_\infty}{\to}f_\infty^*V \to \caW.\]

The short exact sequence in \eqref{EquationIdentificationOfKerLambda}
identifies the $\OO_T$-module $s^*\mathrm{Tan}_{\bV(\SHom(\caE',\caE))^{\rk_\infty= r}/X}$ as a modification of $f^*\SHom(\caE',\caE)$ at the divisor $T_\infty$.  We can say a little more in the case that $\sigma$ itself is a modification.  Let us assume that $\sigma$ fits into an exact sequence
\[ 0 \to f^\ast\caE'\stackrel{\sigma}{\to} f^\ast\caE \stackrel{\alpha}{\to} i_{T_\infty *}\caW \to 0.\]  Dualizing gives another exact sequence
\[ 0 \to f^\ast(\caE^{\vee}) \stackrel{\sigma^\vee}{\to} f^\ast(\caE')^\vee \stackrel{\alpha'}{\to} i_{T_\infty*} (\caW')^\vee \to 0.\]
Then
\begin{eqnarray*}
s^*\mathrm{Tan}_{\bV(\SHom(\caE',\caE))^{\rk_\infty= r}/X} &=& \ker\left[
f^\ast\SHom(\caE',\caE) \to i_{T_\infty*}\SHom(\caW',\caW) \right]\\
&\cong& \ker(\alpha\otimes \alpha')
\end{eqnarray*}
The kernel of $\alpha\otimes\alpha'$ can be computed in terms of $\ker \alpha=f^\ast\caE'$ and $\ker \alpha'=f^\ast(\caE^\vee)$, see Lemma \ref{LemmaTor} below.  It sits in a diagram

\begin{equation}
\label{EquationSESker}
\xymatrix{
& 0 \ar[d] &&& \\
& f^\ast\SHom(\caE,\caE') \ar[d] &&&\\
0 \ar[r] & \caF \ar[r] \ar[d] &f^\ast\SHom(\caE,\caE)\oplus f^\ast\SHom(\caE',\caE') \ar[r] & s^*\mathrm{Tan}_{\bV(\SHom(\caE',\caE))^{\rk_\infty = r}/X} \ar[r] & 0.\\
& \Tor_1(i_{\infty *} \caW',i_{\infty *} \caW) \ar[d] &&&\\
& 0 &&&
}
\end{equation}

\begin{lm} \label{LemmaTor} Let $\caA$ be an abelian $\otimes$-category.  Let 
\[ 0 \to K\stackrel{i}{\to} A \stackrel{f}{\to} B \to 0 \]
\[ 0 \to K' \stackrel{i'}{\to} A' \stackrel{f'}{\to} B'\to 0\]
be two exact sequences in $\caA$, with $A,A',K,K'$ projective.  The homology of the complex
\[
\xymatrix{
 K\otimes K' \ar[rr]^{(i\otimes 1_{K'},1_K\otimes i')\kern 1cm} &\;\;\; & (A\otimes K')\oplus (K\otimes A) \ar[rr]^{\kern 1cm 1_A \otimes i' - i \otimes 1_{A'} } &\;\;\;& A \otimes A' }
 \]
is given by $H_2=0$, $H_1\cong \Tor_1(B,B')$, and $H_0\cong B\otimes B'$.  Thus, $K''=\ker(f\otimes f'\from A\otimes A' \to B\otimes B')$ appears in a diagram
\[
\xymatrix{
& 0 \ar[d] &&&\\
& K\otimes K' \ar[d] & & & \\
0\ar[r] & L\ar[r]\ar[d]  & (A\otimes K')\oplus (K\otimes A) \ar[r] & K'' \ar[r]& 0 \\
& \Tor_1(B,B') \ar[d] &&&\\
& 0&&&
}
\]
where both sequences are exact.
\end{lm}

\begin{proof} Let $C_{\bullet}$ be the complex $K\to A$, and let $C'_{\bullet}$ be the complex $K'\to A'$.  
Since $C'_{\bullet}$ is a projective resolution of $B'$, we have a Tor spectral sequence \cite[\href{https://stacks.math.columbia.edu/tag/061Z}{Tag 061Z}]{StacksProject} 
\[ E^2_{i,j}\from \Tor_j(H_i(C_{\bullet}),B') \implies H_{i+j}(C_{\bullet}\otimes C_{\bullet}'). \]
We have $E^2_{0,0}=B\otimes B'$ and $E^2_{0,1}=\Tor_1(B,B')$, and $E^2_{i,j}=0$ for all other $(i,j)$.  Therefore $H_0(C_{\bullet} \otimes C'_{\bullet}) \cong B\otimes B'$ and $H_1(C_{\bullet} \otimes C'_{\bullet}) \cong \Tor_1(B,B')$, which is the lemma.
\end{proof} 

\subsection{A tangent space calculation}  We return to the setup of \S\ref{SectionTheSpacesMH}.  Thus we have fixed a $p$-divisible group $H$ over a perfect field $k$, and an algebraically closed perfectoid field $C$ containing $W(k)[1/p]$.   But now we specialize to the case that $H$ is isoclinic.  Therefore $D=\End H$ (up to isogeny) is a central simple $\Q_p$-algebra.  Let $\caE=\caE_C(H)$;  we have $\SHom(\caE,\caE)\isom D\otimes_{\Q_p} \OO_{X_C}$.   

Recall the scheme $Z\to X_C$, defined as a fiber product in \eqref{EquationDiagramZ}.  Let $s\from X_C\to Z$ be a section.  This corresponds to a morphism $\sigma\from \OO_{X_C}^n\to\caE$.  Let $W'$ and $W$ be the cokernel of $\sigma_\infty$;  these are $C$-vector spaces. 

We are interested in the vector bundle $s^*\mathrm{Tan}_{Z/X_C}$.   This is the kernel of the derivative of the determinant map:
\[ s^*\mathrm{Tan}_{Z/X_C}=\ker\left(D_s\det\from s^*\mathrm{Tan}_{\bV(\caE^n)^{\rk_\infty= n-d}/X_C}\to \det \caE\right).\]
We apply \eqref{EquationSESker} to give a description of $s^*\mathrm{Tan}_{\bV(\caE^n)^{\rk_\infty= n-d}/X_C}$.  We get a diagram of $\OO_{X_C}$-modules
\begin{equation}
\label{EquationSESker2}
\xymatrix{
& 0 \ar[d] &&& \\
& (\caE^{\vee})^n \ar[d] &&&\\
0 \ar[r] & \caF \ar[r] \ar[d] & (M_n(\Q_p)\times D)\otimes \OO_{X_C}  \ar[r] & s^*\mathrm{Tan}_{\bV(\mathcal{E}^n)^{\rk_\infty= n-d}/X_C} \ar[r] & 0.\\
& \Tor_1(i_{\infty *} W',i_{\infty *} W) \ar[d] &&&\\
& 0 &&&
}
\end{equation}
On the other hand, the horizontal exact sequence fits into a diagram
\begin{equation}
\label{DescriptionOfTanV}
\xymatrix{
0 \ar[r] & \caF\ar[r] & (M_n(\Q_p)\times D)\otimes \OO_{X_C} \ar[d]_{\tr} \ar[r] & s^*\mathrm{Tan}_{\bV(\caE^n)^{\rk_\infty\leq n-d}/X_C} \ar[r] \ar[d]^{D_s\det} & 0 \\
& & \OO_{X_C} \ar[r]_{\tau} & \det \caE & 
}
\end{equation}

The arrow labeled $\tr$ is induced from the $\Q_p$-linear map $M_n(\Q_p)\times D\to \Q_p$ carrying $(\alpha',\alpha)$ to $\tr(\alpha')-\tr(\alpha)$ (reduced trace on $D$).  The commutativity of the lower right square boils down to the identity, valid for sections $s_1,\dots,s_n\in H^0(X_C,\caE)$ and $\alpha\in D$:
\[
\left((\alpha s_1)\wedge s_2\wedge\cdots \wedge s_n\right) +\cdots +
\left(s_1\wedge \cdots \wedge (\alpha s_n)\right)
=(\tr\; \alpha) (s_1\wedge\cdots\wedge s_n). \]
(There is a similar identity for $\alpha'\in M_n(\Q_p)$.)
Because the arrow labeled $\tau$ is injective, we can combine \eqref{EquationSESker2} and \eqref{DescriptionOfTanV} to arrive at a description of $s^*\Tan_{Z/X_C}$:
\begin{equation}
\label{DescriptionOfTanZ}
\xymatrix{
& 0 \ar[d] &&& \\
& (\caE^\vee)^n \ar[d] &&&\\
0 \ar[r] & \caF \ar[r] \ar[d] & (M_n(\Q_p)\times D)^{\tr =0}\otimes \OO_X  \ar[r] & s^*\mathrm{Tan}_{Z/X_C} \ar[r] & 0.\\
& \Tor_1(i_{\infty *} W',i_{\infty *} W) \ar[d] &&&\\
& 0 &&&
}
\end{equation}

We pass to duals to obtain
\begin{equation}
\label{DescriptionOfTanZDual}
\xymatrix{
& & & 0 \ar[d] &\\
0 \ar[r] &
(s^*\mathrm{Tan}_{Z/X_C})^\vee \ar[r] &
((M_n(\Q_p)\times D)/\Q_p) \otimes_{\Q_p} \OO_{X_C}\ar@{..>}[dr]\ar[r]& \caF^\vee\ar[d] \ar[r] & 0
\\
&&& \caE^n \ar[d] & \\
& & & \Tor_1(i_{\infty *}((W')^\vee,i_{\infty*}W^\vee) \ar[d] &\\
& & & 0 &
}
\end{equation}
The dotted arrow is induced from the map $(M_n(\Q_p)\times D)\otimes_{\Q_p} \OO_{X_C}\to \caE^n$ sending $(\alpha',\alpha)\otimes 1$ to $\alpha\circ \sigma- \sigma\circ \alpha'$.  
\begin{thm}
\label{TheoremHNSlopes}
If $s$ is a section to $Z \rightarrow X_C$ corresponding, under the  isomorphism of Lemma \ref{lm:MZ_and_MHinfty}, to a point $x \in \caM_{H,\infty}^\tau(C)$, then the following are equivalent: 
\begin{enumerate}
\item The vector bundle $s^*\mathrm{Tan}_{Z/X_C}$ has a Harder-Narasimhan slope which is $\leq 0$.
\item The point $x$ lies in the special locus $\cM_{H,\infty}^{\tau,\spe}$.
\end{enumerate}
\end{thm}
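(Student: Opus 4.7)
The plan is to compute the $\Q_p$-vector space $H^0(X_C, (s^\ast\Tan_{Z/X_C})^\vee)$, identify it with $A_x/\Q_p$, and then conclude via the Fargues--Fontaine classification of vector bundles on $X_C$. Diagram \eqref{DescriptionOfTanZDual}, together with the explicit commutator formula recorded just after it, supplies essentially all of the input.

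The first step is to extract from \eqref{DescriptionOfTanZDual} the short exact sequence
\[ 0\to (s^\ast\Tan_{Z/X_C})^\vee \to \bigl((M_n(\Q_p)\times D)/\Q_p\bigr)\otimes_{\Q_p}\OO_{X_C}\to \caF^\vee\to 0, \]
and to observe that dualizing the vertical sequence $0\to(\caE^\vee)^n\to\caF\to\Tor_1(i_{\infty\ast}W',i_{\infty\ast}W)\to 0$ of \eqref{DescriptionOfTanZ} produces an injection $\caF^\vee\hookrightarrow\caE^n$ with torsion cokernel (the sheaf-$\Hom$ from the torsion piece $\Tor_1$ into the torsion-free $\OO_{X_C}$ vanishes, so $\caF^\vee$ is torsion-free hence locally free). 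Taking $H^0$ on $X_C$, and using $H^0(X_C,\OO_{X_C})=\Q_p$ together with the induced injection $H^0(\caF^\vee)\hookrightarrow H^0(\caE)^n$, I would then obtain
\[ H^0\bigl((s^\ast\Tan_{Z/X_C})^\vee\bigr) \;=\; \ker\!\Bigl((M_n(\Q_p)\times D)/\Q_p \to H^0(\caE)^n\Bigr). \]
By the explicit formula following \eqref{DescriptionOfTanZDual}, the map on the right sends $(\alpha',\alpha)\mapsto \alpha\circ\sigma-\sigma\circ\alpha'$, so its kernel is precisely the set of pairs satisfying $\alpha\circ\sigma=\sigma\circ\alpha'$ modulo the diagonal $\Q_p$, which is $A_x/\Q_p$ in view of the description of $A_x$ in Section \ref{SpecialLocus}.

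To finish, I would invoke the Fargues--Fontaine classification: every vector bundle on $X_C$ is a direct sum $\bigoplus\OO(\lambda_i)$, and $H^0(\OO(\lambda))\ne 0$ iff $\lambda\geq 0$. Hence $H^0((s^\ast\Tan_{Z/X_C})^\vee)\ne 0$ iff $(s^\ast\Tan_{Z/X_C})^\vee$ has a non-negative HN slope iff $s^\ast\Tan_{Z/X_C}$ has an HN slope $\leq 0$. Combining with the identification above, condition (1) is equivalent to $A_x\supsetneq\Q_p$, which is exactly condition (2). The main obstacle I foresee is the diagram-chase: one must carefully dualize \eqref{DescriptionOfTanZ}, verify that $\caF^\vee$ is locally free and embeds in $\caE^n$ so that $H^0(\caF^\vee)\hookrightarrow H^0(\caE)^n$, and confirm that the composite $((M_n(\Q_p)\times D)/\Q_p)\otimes\OO_{X_C}\to\caF^\vee\hookrightarrow\caE^n$ really is the commutator map advertised by \eqref{DescriptionOfTanZDual}. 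Once those identifications are pinned down, the rest is formal.
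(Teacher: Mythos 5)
Your proposal is correct and follows the paper's own argument essentially verbatim: take $H^0$ of the dual diagram \eqref{DescriptionOfTanZDual}, use the injectivity of $H^0(X_C,\caF^\vee)\to H^0(X_C,\caE^n)$ (which you justify by noting $\SHom$ out of the torsion sheaf $\Tor_1$ vanishes) to identify $H^0(X_C,(s^*\Tan_{Z/X_C})^\vee)$ with $A_x/\Q_p$ via the commutator formula, and conclude by the Fargues--Fontaine classification that this is nonzero precisely when $s^*\Tan_{Z/X_C}$ has a slope $\leq 0$. The only difference is that you spell out the diagram-chase and the slope/$H^0$ equivalence in more detail than the paper does.
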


\begin{proof} Let $\sigma\from \OO_{X_C}^n\to \caE$ denote the homomorphism corresponding to $x$. Condition (1) is true if and only if $H^0(X_C,s^*\mathrm{Tan}_{Z/X_C}^\vee)\neq 0$.  We now take $H^0$ of \eqref{DescriptionOfTanZDual}, noting that $H^0(X_C,\caF^\vee)\to H^0(X_C,\caE^n)$ is injective.   We find that
\begin{eqnarray*}
 H^0(X_C,s^*\mathrm{Tan}_{Z/X_C}^\vee)&\cong& \left\{(\alpha',\alpha)\in M_n(\Q_p)\times D\biggm\vert\alpha\circ \sigma=\sigma\circ \alpha'\right\}/\Q_p. \\
 &=& A_x/\Q_p.
 \end{eqnarray*}
This is nonzero exactly when $x$ lies in the special locus.
\end{proof}

Combining Theorem \ref{TheoremHNSlopes} with the criterion for cohomological smoothness in Theorem \ref{TheoremCohomologicalSmoothnessCriterion} proves Theorem \ref{MainTheorem} for the space $\cM_{H,\infty}$.

Naturally we wonder whether it is possible to give a complete discription of  $s^*\Tan_{Z/X_C}$, as this is the ``tangent space'' of $\cM_{H,\infty}^\tau$ at the point $x$.  Note that $s^*\Tan_{Z/X_C}$ can only have nonnegative slopes, since it is a quotient of a trivial bundle.  Therefore Theorem \ref{TheoremHNSlopes} says that 0 appears as a slope of $s^*\Tan_{Z/X_C}$ if and only if $s$ corresponds to a special point of $\cM_{H,\infty}^\tau$.  

\begin{ex} Consider the case that $H$ has dimension 1 and height $n$, so that $\cM_{H,\infty}$ is an infinite-level Lubin-Tate space.  Suppose that $x\in \cM_{H,\infty}(C)$ corresponds to a section $s\from X_C\to Z$.  Then $s^*\Tan_{Z/X_C}$ is a vector bundle of rank $n^2-1$ and degree $n-1$, with slopes lying in $[0,1/n]$;  this already limits the possibilities for the slopes to a finite list.

If $n=2$ there are only two possibilities for the slopes appearing in $s^*\Tan_{Z/X_C}$:  $\set{1/3}$ and $\set{0,1/2}$.  These correspond exactly to the nonspecial and special loci, respectively.

If $n=3$, there are a priori five possibilities for the slopes appearing in $s^*\Tan_{Z/X_C}$:  $\set{1/4,1/4}$, $\set{1/3,1/5}$, $\set{1/3,1/3,0,0}$, $\set{2/7,0}$, and $\set{1/3,1/4,0}$.  But in fact the final two cases cannot occur:  if 0 appears as a slope, then $x$ lies in the special locus, so that $A_x\neq \Q_p$.  But as $A_x$ is isomorphic to a subalgebra of $\End^\circ H$, the division algebra of invariant 1/3, it must be the case that $\dim_{\Q_p} A_x = 3$, which forces 0 to appear as a slope with multiplicity $\dim_{\Q_p} A_x/\Q_p = 2$.  On the nonspecial locus, we suspect that the generic (semistable) case $\set{1/4,1/4}$ always occurs, as otherwise there would be some unexpected stratification of $\cM_{H,\infty}^{\circ,\ns}$.  But currently we do not know how to rule out the case $\set{1/3,1/5}$.
\end{ex}

\subsection{The general case}  Let $\caD = (B,V,H,\mu)$ be a rational EL datum over $k$, with reflex field $E$.  Let $F$ be the center of $B$.  As in Section \ref{SpecialLocus}, let $D=\End_{B} V$ and $D'=\End_{B} H$, so that $D$ and $D'$ are both $F$-algebras.  

 Let $C$ be a perfectoid field containing $\breve{E}$, and let $\tau\in \cM_{\det \caD,\infty}(C)$.  Let $\cM_{\caD,\infty}^\tau$ be the fiber of the determinant map over $\tau$.  We will sketch the proof that $\cM_{\caD,\infty}^\tau\to \Spa C$ is cohomologically smooth.  It is along the same lines as the proof for $\cM_{H,\infty}$, but with some extra linear algebra added.

The space $\cM_{\caD,\infty}^{\tau}$ may be expressed as the space of global sections of a smooth morphism $Z\to X_C$, defined as follows.  We have the geometric vector bundle $\bV(\SHom_B(V\otimes_{\Q_p}\OO_X, \caE_C(H)))$. In its fiber over $\infty$, we have the locally closed subscheme whose $R$-points for a $C$-algebra $R$ are morphisms, whose cokernel is as a $B\otimes_{\bQ_p} R$-module isomorphic to $V_0 \otimes_{\breve E} R$, where $V_0$ is the weight 0 subspace of $V\otimes_{\Q_p} \breve{E}$ determined by $\mu$. We then have the dilatation $\bV(\SHom_B(V\otimes_{\Q_p}\OO_{X_C}, \caE_C(H)))^{\mu}$ of $\bV(\SHom_B(V\otimes_{\Q_p}\OO_X, \caE_C(H)))$ at this locally closed subscheme. Its points over $S=\Spa(R,R^+)$ parametrize $B$-linear morphisms $s\from V\otimes_{\Q_p} \OO_{X_S}\to \caE_S(H)$, such that (locally on $S$) the cokernel of the fiber $s_\infty$ is isomorphic as a $(B\otimes_{\Q_p} R)$-module to $V_0\otimes_{\breve{E}} R$. Finally, the morphism $Z\to X_C$ is defined by the cartesian diagram
\[
\xymatrix{
Z \ar[r] \ar[d] & \bV(\SHom_B(V\otimes_{\Q_p}\OO_{X_C}, \caE_C(H)))^{\mu}  \ar[d]^{\det} \\
X_C \ar[r]_(.17){\tau} & \bV(\SHom_F(\det_F V\otimes_{\Q_p}\OO_{X_C}, \det_F \caE_C(H))).
}
\]
%

Let $x\in \cM_{\caD,\infty}(C)$ correspond to a $B$-linear morphism $s\from V\otimes_{\Q_p} \OO_{X_C} \to \caE_C(H)$ and a section of 
$Z\to X_C$ which we also call $s$.  Define $B\otimes_{\Q_p} C$-modules $W'$ and $W$ by 
\[ 0 \to W'\to V\otimes_{\Q_p} C \stackrel{s_\infty}{\to} \caE_C(H)_\infty \to W \to 0.\]
The analogue of \eqref{DescriptionOfTanZDual} is a diagram which computes the dual of $s^*\Tan_{Z/X_C}$:
\begin{equation}
\label{DescriptionOfTanZDualGeneralized}
\xymatrix{
& & & 0 \ar[d] &\\
0 \ar[r] &
(s^*\mathrm{Tan}_{Z/X_C})^\vee \ar[r] &
((D'\times D)/F) \otimes_{\Q_p} \OO_{X_C}\ar@{..>}[dr]\ar[r]& \caF^\vee\ar[d] \ar[r] & 0
\\
&&& \SHom(V\otimes_{\Q_p} \OO_{X_C},\caE_C(H)) \ar[d] & \\
& & & \Tor_1^F(i_{\infty *}((W')^\vee,i_{\infty*}W^\vee) \ar[d] &\\
& & & 0 &
}
\end{equation}
This time, the dotted arrow is induced from the map $(D'\times D)\otimes_{\Q_p} \OO_{X_C}\to \SHom(V\otimes_{\Q_p} \OO_{X_C},\caE_C(H))$ sending $(\alpha',\alpha)\otimes 1$ to $\alpha\circ s- s\circ \alpha'$.   Taking $H^0$ in \eqref{DescriptionOfTanZDualGeneralized} shows that $H^0(X_C,s^*\Tan_{Z/X_C}^\vee) = A_x / F$, and this is nonzero exactly when $x$ lies in the special locus.  

\subsection{Proof of Corollary \ref{CorollaryModularCurve}}

We conclude with a discussion of the infinite-level modular curve $X(p^\infty)$.  Recall from \cite{ScholzeTorsion} the following facts about the Hodge-Tate period map $\pi_{HT}\from X(p^\infty)\to \bP^1$.  The ordinary locus in $X(p^\infty)$ is sent to $\bP^1(\Q_p)$.  The supersingular locus is isomorphic to finitely many copies of $\cM_{H,\infty,C}$, where $H$ is a connected $p$-divisible group of height 2 and dimension 1 over the residue field of $C$;  the restriction of $\pi_{HT}$ to this locus agrees with the $\pi_{HT}$ we had already defined on each $\cM_{H,\infty,C}$. 

We claim that the following are equivalent for a $C$-point $x$ of $X(p^\infty)^\circ$:
\begin{enumerate}
\item The point $x$ corresponds to an elliptic curve $E/\OO_C$, such that the $p$-divisible group $E[p^\infty]$ has $\End E[p^\infty]=\Z_p$.  
\item The stabilizer of $\pi_{HT}(x)$ in $\PGL_2(\Q_p)$ is trivial.
\item There is a neighborhood of $x$ in $X(p^\infty)^\circ$ which is cohomologically smooth over $C$.
\end{enumerate}

First we discuss the equivalence of (1) and (2).  If $E$ is ordinary, then  $E[p^\infty]\isom \Q_p/\Z_p \times \mu_{p^\infty}$ certainly has endomorphism ring larger than $\Z_p$, so that (1) is false.   Meanwhile, the stabilizer of $\pi_{HT}(x)$ in $\PGL_2(\Q_p)$ is a Borel subgroup, so that (2) is false as well.  The equivalence between (1) and (2) in the supersingular case is a special case of the equivalence discussed in Section \ref{SpecialLocus}.  

Theorem \ref{MainTheorem} tells us that $\cM_{H,\infty}^{\circ,\ns}$ is cohomologically smooth, which implies that shows that (2) implies (3).  We therefore are left with showing that if (2) is false for a point $x\in X(p^\infty)^{\circ}$, then no neighborhood of $x$ is cohomologically smooth.

First suppose that $x$ lies in the ordinary locus.  This locus is fibered over $\bP^1(\Q_p)$.  Suppose $U$ is a sufficiently small neighborhood of $x$.  Then $U$ is contained in the ordinary locus, and so $\pi_0(U)$ is nondiscrete.  This implies that $H^0(U,\bF_\ell)$ is infinite, and so $U$ cannot be cohomologically smooth. 

Now suppose that $x$ lies in the supersingular locus, and that $\pi_{HT}(x)$ has nontrivial stabilizer in $\PGL_2(\Q_p)$.  We can identify $x$ with a point in $\cM_{H,\infty}^{\circ,\spe}(C)$.  We intend to show that every neighborhood of $x$ in $\cM_{H,\infty}^{\circ}$ fails to be cohomologically smooth.

Not knowing a direct method, we appeal to the calculations in \cite{WeinsteinSemistable}, which constructed semistable formal models for each $\cM_{H,m}^{\circ}$.  The main result we need is Theorem 5.1.2, which uses the term ``CM points'' for what we have called special points.   There exists a decreasing basis of neighborhoods $Z_{x,0}\supset Z_{x,1}\supset\cdots$ of $x$ in $\cM_{H,\infty}^{\circ}$.  For each affinoid $Z=\Spa(R,R^+)$, let $\overline{Z}=\Spec R^+\otimes_{\OO_C} \kappa$, where $\kappa$ is the residue field of $C$.  For each $m\geq 0$, there exists a nonconstant morphism $\overline{Z}_{x,m}\to C_{x,m}$, where $C_{x,m}$ is an explicit nonsingular affine curve over $\kappa$.  This morphism is equivariant for the action of the stabilizer of $Z_{x,m}$ in $\SL_2(\Q_p)$.  For infinitely many $m$, the completion $C_{x,m}^{\rm{cl}}$ of $C_{x,m}$ is a projective curve with positive genus.  

Let $U\subset \cM_{H,\infty}^{\circ}$ be an affinoid neighborhood of $x$.  Then there exists $N\geq 0$ such that $Z_{x,m}\subset U$ for all $m\geq N$.  Let $K\subset \SL_2(\Q_p)$ be a compact open subgroup which stabilizes $U$, so that $U/K$ is an affinoid subset of the rigid-analytic curve $\cM_{H,\infty}^{\circ}/K$.    For each $m\geq N$, let $K_m\subset K$ be the stabilizer of $Z_{x,m}$, so that $K_m$ acts on $C_{x,m}$.  

There exists an integral model of $U/K$ whose special fiber contains as a component the completion of each $\overline{Z}_{x,m}/K_m$ which has positive genus.  Since there is a nonconstant morphism $\overline{Z}_{x,m}/K_m\to C_{x,m}/K_m$, we must have
\[ \dim_{\bF_\ell} H^1(U/K,\bF_\ell) \geq \sum_{m\geq N} \dim_{\bF_\ell} H^1(C_{x,m}^{\rm{cl}}/K_m,\bF_\ell). \]
Now we take a limit as $K$ shrinks.  Since $U\sim \varprojlim U/K$, we have $H^1(U,\bF_\ell)\isom \varinjlim H^1(U/K,\bF_\ell)$.  Also, for each $m$, the action of $K_m$ on $C_{x,m}$ is trivial for all sufficiently small $K$.  Therefore 
\[ \dim_{\bF_\ell} H^1(U,\bF_\ell) \geq \sum_{m\geq N} \dim_{\bF_\ell} H^1(C_{x,m}^{\rm{cl}},\bF_\ell) = \infty. \]
This shows that $U$ is not cohomologically smooth.

\medskip

\noindent \textbf{Acknowledgements.} The authors want to thank Peter Scholze for his help and his interest in their work. Also they thank Andreas Mihatsch for pointing out a mistake in a previous version of the manuscript. The first named author was supported by Peter Scholze's Leibniz Preis.   The second author was supported by NSF Grant No. DMS-1440140 while in residence at the Mathematical Sciences Research Institute in Berkeley, California.  

\bibliographystyle{amsalpha}
\bibliography{bibfile}

\end{document}